\def\EE{\mathbb{E}}
\def\nm{\noalign{\medskip}}
\newcommand\dis\displaystyle
\def\NN{\mathbb{N}}
\title{Tracking of a Mobile Target Using Generalized Polarization Tensors\thanks{\footnotesize This work was supported by ERC Advanced Grant Project
MULTIMOD--267184 and NRF grants No. 2009-0090250 and
2010-0017532.}}
\author{}
\author{Habib Ammari\thanks{\footnotesize Department of Mathematics and Applications,
    Ecole Normale Sup\'erieure, 45 Rue d'Ulm, 75005 Paris, France
    (habib.ammari@ens.fr, boulier@dma.ens.fr, han.wang@ens.fr).} \and Thomas
  Boulier\footnotemark[2] \and Josselin Garnier\thanks{\footnotesize Laboratoire de
    Probabilit\'es et Mod\`eles Al\'eatoires \& Laboratoire Jacques-Louis Lions,
    Universit\'e Paris VII, 75205 Paris Cedex 13, France (garnier@math.jussieu.fr).}
  \and Hyeonbae Kang\thanks{Department of Mathematics, Inha University, Incheon 402-751,
    Korea (hbkang@inha.ac.kr).}  \and Han Wang\footnotemark[2]}
\begin{document}

\maketitle

\begin{abstract}
In this paper we apply an extended Kalman filter to track both the
location and the orientation of a mobile target  from multistatic
response measurements. We also analyze the effect of the
limited-view aspect on the stability and the efficiency of our
tracking approach. Our algorithm is based on the use of the
generalized polarization tensors, which can be reconstructed from
the multistatic response measurements by solving a linear system.
The system has the remarkable property that low order generalized
polarization tensors are not affected by the error caused by  the
instability of higher orders in the presence of measurement noise.
\end{abstract}

\bigskip

 \noindent {\footnotesize Mathematics Subject Classification
 (MSC2000): 35R30, 35B30}

  \noindent {\footnotesize Keywords: generalized polarization tensors, target tracking,
  extended Kalman filter, position and orientation tracking, limited-view data, instability}

\section{Introduction}
\label{sec:intro}

With each domain and material parameter, an infinite number of
tensors, called the Generalized Polarization Tensors (GPTs), is
associated. The concept of GPTs was introduced in
\cite{AK_SIMA_03, AK04}. The GPTs contain significant information
on the shape of the domain \cite{AGKLY11, AK_MMS_03,
ammari_generalized_2012-1}. It occurs in several interesting
contexts, in particular, in low-frequency scattering
\cite{dassios, AK04}, asymptotic models of dilute composites (see
\cite{milton} and \cite{AKT_AA_05}), in invisibility cloaking in
the quasi-static regime \cite{ammari_enhancement_2011-1} and in
potential theory related to certain questions arising in
hydrodynamics \cite{PS51}.

Another important use of this concept is for imaging diametrically
small conductivity inclusions from boundary or multistatic
response measurements. Multistatic response measurements are
obtained using arrays of point source transmitters and receivers.
This measurement configuration gives the so-called multistatic
response matrix (MSR), which measures the change in potential
field due to a conductivity inclusion. In fact, the GPTs are the
basic building blocks for the asymptotic expansions of the
perturbations of the MSR matrix due to the presence of small
conductivity inclusions inside a conductor \cite{FV_ARMA_89,CMV98,
AK_SIMA_03}. They can be reconstructed from the multi-static
response (MSR) matrix by solving a linear system. The system has
the remarkable property that low order generalized polarization
tensors are not affected by the error caused by  the instability
of higher orders in the presence of measurement noise. Based on
the asymptotic expansion, efficient and direct (non-iterative)
algorithms to determine the location and some geometric features
of the inclusions were proposed. We refer to \cite{AK04,
ammari_polarization_2007} and the references therein for recent
developments of this theory. An efficient numerical code for
computing the GPTs is described in \cite{yves}.

In \cite{ammari_target_2012}, we have analyzed the stability and
the resolving order of GPT in a circular full angle of view
setting with coincident sources and receivers, and developed
efficient algorithms for target identification from a dictionary
by matching the contracted GPTs (CGPTs). The CGPTs  are particular
linear combinations of the GPTs (called harmonic combinations) and
were first introduced in \cite{ammari_enhancement_2011-1}. As a
consequence, explicit relations between the CGPT of scaled,
rotated and translated objects have been established in
\cite{ammari_target_2012}, which suggest strongly that the GPTs
can also be used for tracking the location and the orientation of
a mobile object. One should have in mind that, in real
applications, one would like to localize the target and
reconstruct its orientation directly from the MSR data without
reconstructing the GPTs.

In this paper we apply an extended Kalman filter to track both the
location and the orientation of a mobile target directly from MSR
measurements.

The Extended Kalman Filter (EKF) is a generalization of the Kalman
Filter (KF) to nonlinear  dynamical systems. It is robust with
respect to noise and computationally inexpensive, therefore is
well suited for real-time applications such as tracking
\cite{kalman2}.

Target tracking is an important task in sonar and radar imaging,
security technologies,  autonomous vehicle, robotics, and
bio-robotics, see, for instance,
\cite{track1,track2,track3,track4,track5, track6}. An example in
bio-robotics is the weakly electric fish which has the faculty to
probe an exterior target with its electric dipole and multiple
sensors distributed on the skin \cite{ammari_modeling_2012}. The
fish usually swims around the target to acquire information. The
use of Kalman-type filtering for target tracking is quite
standard, see, for instance,
\cite{track1,track2,track3,track4,track5, track6}.

However, to the best of our knowledge, this is the first time
where tracking of the orientation of a target is provided.
Moreover, we analyze the ill-posed character of both the location
and orientation tracking in the case of limited-view data. In
practice, it is quite realistic to have the sources/receivers
cover only a limited angle of view. In this case, the
reconstruction of the GPTs becomes more ill-posed than in the
full-view case.

It is the aim of this paper to provide a fast algorithm for
tracking both the location and the orientation of a mobile target,
and precisely analyze the stability of the inverse problem in the
limited-view setting.

The paper is organized as follows. In section
\ref{sec:cond_prob_cgpt2} we recall the conductivity problem and
the linear system relating the CGPTs with the MSR data, and
provide a stability result in the full angle of view setting. In
section \ref{sec:tracking_of_mobile_target} we present a GPT-based
location and orientation tracking algorithm using an extended
Kalman filter and show the numerical results in the full-view
setting. In section \ref{sec:limited_angle_view} we analyze the
stability of the CGPT-reconstruction in the limited-view setting
and also test the performance of the tracking algorithm. The paper
ends with a few concluding remarks. An appendix is for a brief
review of the extended Kalman filter.

\section{Conductivity problem and reconstruction of CGPTs}
\label{sec:cond_prob_cgpt2}

We consider the two-dimensional conductivity problem.  Let $B$ be
a bounded $\mathcal{C}^2$-domain of characteristic size of order
$1$ and centered at the origin. Then $D=z+\delta B$ is an
inclusion of characteristic size of order $\delta$ and centered at
$z$. We denote by $0<\kappa \neq 1< +\infty$ its conductivity, and
$\lambda:=(\kappa+1)/(2\kappa - 2)$ its contrast. In the circular
setting, $N$ coincident sources/receivers are evenly spaced on the
circle of radius $R$ and centered at the origin $O$ between the
angular range $(0,\gamma]$. In the full-view case, $\gamma = 2\pi$
while $\gamma < 2\pi$ in the limited-view configuration. The
position of $s$-th source (and $r$-th receiver) is denoted by
$x_s$ (and $x_r$, respectively) for $s,r=1\ldots N$, with
$\theta_s=\gamma s/N$ the angular position. We require that the
circle is large enough to include the inclusion ($R>\delta$). In
the following, we set $\rho:=R/\delta>1$.

\subsection{CGPTs and the linear system}
\label{sec:CGPT_linsys}

In the presence of $D$, the electrical potential $u_s$ resulting
from a source at $x_s$ is given as the solution to the following
conductivity problem \cite{ammari_target_2012}:
\begin{align}
  \label{eq:simp_model}
  \left\{
    \begin{aligned}
      \nabla \cdot((1+(\kappa-1)\chi_D)\nabla u_s)(x) &=0, &x \in \R^2,\\
      u_s(x)-\Gamma(x-x_s) &= O(\abs{x}^{-1}), &\abs{x}\rightarrow
      +\infty,
    \end{aligned}
  \right.
\end{align}
where $\Gamma(x)= (1/2\pi)\, \log\abs{x}$ is the fundamental
solution of the Laplacian in $\R^2$: $\Delta \Gamma(x)=
\delta_0(x)$, with $\delta_0$ being the Dirac mass at $0$.

 Using asymptotic expansion of the
fundamental solution, the MSR data $\bV=(V_{sr})_{s,r}$ being
defined as $V_{sr}=u_s(x_r)-\Gamma(x_r -x_s)$, is linearly related
to the GPTs of $B$ as \cite{AK04,AK_SIMA_03}:
\begin{equation}
  V_{sr} = \sum_{|\alpha|,|\beta|=1}^{K} \frac{\delta^{\abs\alpha+\abs\beta}}{\alpha ! \beta !}
  \partial^\alpha \Gamma(z-x_s) M_{\alpha \beta}(\lambda, B)
  \partial^\beta \Gamma(z-x_r) + E_{sr} + W_{sr},
  \label{eq:MSR_GPT_relation}
\end{equation}
where $K$ denotes the highest order of GPTs in the expansion,
$\bE=(E_{sr})_{s,r}$ the truncation error (non-zero if
$K<\infty$), and $\bW=(W_{sr})_{s,r}$ the measurement noise
following independently the same normal distribution:
$W_{sr}\iid\Gaussian{0}{\stdnoise^2}$, of mean zero and variance
$\stdnoise^2$.

The contracted GPTs, being defined as a harmonic combination of
the GPTs \cite{ammari_enhancement_2011-1}, allow us to put
\eqref{eq:MSR_GPT_relation} into an equivalent form
\cite{ammari_target_2012}:
\begin{equation}
  \begin{aligned}
    V_{sr} &= \sum_{m,n=1}^K \underbrace{
      \frac{1}{2\pi m \rho^m} \begin{pmatrix} \displaystyle
        {\cos m\theta_s}, \displaystyle {\sin
          m\theta_s} \end{pmatrix}}_{\mathbf{A}_{sm}}
    \underbrace{\begin{pmatrix}
        \mathbf{M}^{cc}_{mn} & \mathbf{M}^{cs}_{mn} \\
        \mathbf{M}^{sc}_{mn} & \mathbf{M}^{ss}_{mn}
      \end{pmatrix}}_{\mathbf{M}_{mn}}
    \underbrace{\begin{pmatrix}
        \cos n\theta_r \\
        \sin n\theta_r
      \end{pmatrix}
      \frac{1}{2\pi n \rho^n}}_{(\mathbf{A}_{rn})^\top} + E_{sr} +
      W_{sr},
  \end{aligned}
  \label{eq:MSR_CGPT_linsys_coeffwise}
\end{equation}
where $^\top$ denotes the transpose and the CGPT matrix
$\bM=(\bM_{mn})_{m,n}$ \footnote{Throughout the paper, we will
write
  $\bM_{mn}$ for the $m,n$-th $2\times 2$ building block, and $(\bM)_{ab}$ for the $a,b$-th entry in
  $\bM$.} has dimension $2K\times 2K$.

Recall that $\bA=\mbf C\mbf D$, with $\bC$ being a $N \times 2K$ matrix constructed
from the block $\bC_{rm} = (\cos m \theta_r , \sin m\theta_r)$ and $\bD$ a $2K \times
2K$ diagonal matrix:
\begin{equation} \label{defDC}
  \bC = \begin{pmatrix}
    \bC_{11} & \bC_{12} & \cdots & \bC_{1K}\\
    \bC_{21} & \bC_{22} & \cdots & \bC_{2K}\\
    \cdots & \cdots & \ddots & \cdots\\
    \bC_{N1} & \bC_{N2} & \cdots & \bC_{NK}
  \end{pmatrix};
  \bD = \frac{1}{2\pi} \begin{pmatrix}
    \mathbf{I}_2/\rho&  &  &  \\
    &   \mathbf{I}_2/(2 \rho^2) &   &  \\
    &   & \ddots &  \\
    &   &   & \mathbf{I}_2/(K \rho^K)
  \end{pmatrix}.
\end{equation}
Here, $\bI_2$ is the $2\times 2$ identity matrix. With these
notations in hand, we introduce the linear operator
\begin{align}
  \label{eq:op_L}
  \bL(\bM)=\bC\bD \bM \bD \bC^\top,
\end{align}
and rewrite \eqref{eq:MSR_CGPT_linsys_coeffwise} as:
\begin{align}
  \label{eq:MSR_CGPT_linsys}
  \bV = \bL(\bM) + \bE + \bW .
\end{align}
In order to reconstruct $\bM$, we solve the least-squares problem:
\begin{align}
  \label{eq:Least_square_CGPT_recon}
  \min_{\bM}\  \norm{\bL(\bM) - \bV}_F^2,
\end{align}
where $\norm{\cdot}_F$ denotes the Frobenius norm. It is well
known that \eqref{eq:Least_square_CGPT_recon} admits a unique
minimal norm solution $\bMest=\bL^\dagger(\bV)$, with
$\bL^\dagger$ being the pseudo-inverse of $\bL$ provided by the
following lemma:
\begin{lem}\label{lem:pseudo-inverse-AxB}
  Let $\bA, \bB$ be two real matrices of arbitrary dimension, and define the linear operator
  $\bL(\bX)=\bA \bX \bB^\top$. If $\bA^\dagger, \bB^\dagger$ are the pseudo-inverse of $\bA, \bB$ respectively, then the
  pseudo-inverse of $\bL$ is given by
  \begin{equation}
    \label{eq:pseudo_inverse}
    \bL^\dagger(\bY) = \bA^\dagger \bY (\bB^\dagger)^\top .
  \end{equation}
\end{lem}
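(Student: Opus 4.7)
The plan is to verify the four Moore--Penrose conditions, which uniquely characterize the pseudo-inverse. Define the candidate operator $\bL^{\sharp}(\bY)=\bA^{\dagger}\bY(\bB^{\dagger})^{\top}$. Equipping the relevant matrix spaces with the Frobenius inner product, the adjoint of a map of the form $\bX \mapsto \bP\bX\bQ$ is $\bY\mapsto \bP^{\top}\bY\bQ^{\top}$, since $\langle \bP\bX\bQ,\bY\rangle_{F} = \mathrm{tr}(\bQ\bX^{\top}\bP^{\top}\bY) = \langle \bX,\bP^{\top}\bY\bQ^{\top}\rangle_{F}$. Thus the adjoints of both $\bL$ and $\bL^{\sharp}$ are again of the same product form, which will make the self-adjointness checks almost immediate.

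First I would compute $\bL\bL^{\sharp}\bL(\bX) = \bA(\bA^{\dagger}\bA\bX\bB^{\top}(\bB^{\dagger})^{\top})\bB^{\top} = (\bA\bA^{\dagger}\bA)\bX(\bB\bB^{\dagger}\bB)^{\top}$, and apply $\bA\bA^{\dagger}\bA=\bA$ and $\bB\bB^{\dagger}\bB=\bB$ to recover $\bL(\bX)$. A symmetric computation with the roles of $\bA,\bB$ and their pseudo-inverses swapped gives $\bL^{\sharp}\bL\bL^{\sharp}(\bY)=\bL^{\sharp}(\bY)$, using $\bA^{\dagger}\bA\bA^{\dagger}=\bA^{\dagger}$ and $\bB^{\dagger}\bB\bB^{\dagger}=\bB^{\dagger}$ (the latter transposed).

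Next, for the two self-adjointness conditions, I would write $\bL\bL^{\sharp}(\bY) = (\bA\bA^{\dagger})\,\bY\,(\bB\bB^{\dagger})^{\top}$ and $\bL^{\sharp}\bL(\bX) = (\bA^{\dagger}\bA)\,\bX\,(\bB^{\dagger}\bB)^{\top}$. By the general formula for adjoints noted above and by the Moore--Penrose property that $\bA\bA^{\dagger}$, $\bA^{\dagger}\bA$, $\bB\bB^{\dagger}$, $\bB^{\dagger}\bB$ are all symmetric (they are orthogonal projectors), these two composed operators equal their own adjoints. All four axioms being satisfied, the uniqueness of the Moore--Penrose pseudo-inverse yields $\bL^{\dagger}=\bL^{\sharp}$, which is \eqref{eq:pseudo_inverse}.

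There is really no substantive obstacle; the only care needed is keeping track of the transposes on the right factor (since $\bL$ is written with $\bB^{\top}$ rather than $\bB$), and recognizing that the Moore--Penrose identities for $\bB$ transpose cleanly into identities for $\bB^{\top}$ and $(\bB^{\dagger})^{\top}$. As an alternative write-up one could vectorize: $\mathrm{vec}(\bL(\bX))=(\bB\otimes\bA)\,\mathrm{vec}(\bX)$ and invoke the known identity $(\bB\otimes\bA)^{\dagger}=\bB^{\dagger}\otimes\bA^{\dagger}$, but this merely reshuffles the same four verifications into the Kronecker setting, so the direct axiom check above is the most transparent route.
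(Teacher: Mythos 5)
Your proof is correct and follows essentially the same route as the paper: verifying the four Moore--Penrose conditions for the candidate $\bY\mapsto\bA^\dagger\bY(\bB^\dagger)^\top$, with self-adjointness under the Frobenius inner product reducing to the symmetry of the projectors $\bA^\dagger\bA$, $\bA\bA^\dagger$, $\bB^\dagger\bB$, $\bB\bB^\dagger$. The extra detail you supply on the adjoint of $\bX\mapsto\bP\bX\bQ$, and the Kronecker-product aside, are fine but not needed beyond what the paper already does.
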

\begin{proof}
  This is a straightforward verification of the definition of pseudo-inverse, namely: 1) $\bL^\dagger
  \bL$ and $\bL \bL^\dagger$ are self-adjoint; 2) $\bL \bL^\dagger \bL = \bL$ and $\bL^\dagger \bL \bL^\dagger =
  \bL^\dagger$. For the first point:
  \begin{align*}
    \pinv \bL(\bL(\bX)) = \pinv \bA \bA \bX(\pinv \bB \bB)^\top,
  \end{align*}
  which is self-adjoint since the matrices $\pinv \bA \bA$ and $\pinv \bB \bB$ are symmetric by definition
  of pseudo-inverse; while for the second point, it follows from the
  definition again that
  \begin{align*}
    \bL(\pinv \bL(\bL(\bX))) = \bA\pinv \bA \bA \bX(\bB\pinv \bB \bB)^\top = \bA \bX\T \bB =
    \bL(\bX).
  \end{align*}
  Similarly, one can verify the self-adjointness of $\bL\pinv \bL$ and $\bL^\dagger \bL \bL^\dagger = \bL^\dagger$.
\end{proof}

\subsection{Full-view setting}
\label{sec:full_angle_view}

In  \cite{ammari_target_2012}, we have investigated the resolving
order of CGPT reconstruction in the full angle of view setting:
$\gamma=2\pi$. Given $N\geq 2K$, it has been shown that the matrix
$\bC$ is orthogonal (up to the factor $N/2$): $$\bC^\top \bC=\frac
N 2\mbf {I},$$ and the pseudo-inverse solution takes the form:
\begin{align}
  \label{eq:pseudo_L_full_aov}
  \bL^\dagger(\bV)=\frac{4}{N^2}\inv\bD\bC^\top \bV \bC \inv\bD .
\end{align}
Furthermore, the reconstruction problem is exponentially
ill-posed. More precisely, the following result holds.
\begin{prop}\label{prop:full-angle-view-svd}
Let $\mathbf{e}_{ab}$ be the $2K\times2K$ matrix whose elements
are all zero but the $(a,b)$th element is equal to $1$.
  In the circular and full-view setting with $N\geq2K$, the $(a,b)$-th singular value of the
  operator $\bL$, for $a,b=1, \ldots, 2K$,  is
\begin{equation} \label{singv}
  \lambda_{ab}=N/(8\pi^2 \ceil{a/2}\ceil{b/2}\rho^{\ceil{a/2}+\ceil{b/2}}),
  \end{equation} with
  the matrix $\mathbf{e}_{ab}$ as the right singular vector, and
$\mathbf{f}_{ab}=\lambda_{ab}^{-1}\bL(\mathbf{e}_{ab})$
  as the left singular vector. In particular, the condition number of the operator $\bL$
  is
  $K^2\rho^{2(K-1)}$. 
\end{prop}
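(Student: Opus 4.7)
The plan is to exploit the special structure $\bL(\bM) = \bA\bM\bA^\top$ with $\bA = \bC\bD$, so that the SVD of $\bL$ reduces to the SVD of the single matrix $\bA$ via an outer-product (tensor) construction. I would first compute the SVD of $\bA$ from the two facts already available, namely the orthogonality relation $\bC^\top\bC = (N/2)\mathbf{I}_{2K}$ in the full-view regime and the fact that $\bD$ is diagonal. Then I would lift this to $\bL$ and read off the singular triples.

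For the SVD of $\bA$, I would compute
\[
  \bA^\top\bA \;=\; \bD\,\bC^\top\bC\,\bD \;=\; \frac{N}{2}\,\bD^2,
\]
which is already diagonal. Hence the right singular vectors of $\bA$ are the standard basis vectors $\mathbf{v}_a = \mathbf{e}_a$ of $\R^{2K}$, and the singular values are $\sigma_a = \sqrt{N/2}\,D_{aa}$. Reading the diagonal of $\bD$ off \eqref{defDC}, the $a$-th diagonal entry equals $1/(2\pi\lceil a/2\rceil\rho^{\lceil a/2\rceil})$, so
\[
  \sigma_a \;=\; \frac{1}{2\pi\,\lceil a/2\rceil\,\rho^{\lceil a/2\rceil}}\sqrt{N/2}.
\]
The corresponding left singular vectors $\mathbf{u}_a := \bA\mathbf{e}_a/\sigma_a$ are orthonormal in $\R^N$; this is just the normalized $a$-th column of $\bC$.

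To lift to $\bL$, I would compute directly
\[
  \bL(\mathbf{e}_{ab}) \;=\; \bA\mathbf{e}_a\mathbf{e}_b^\top \bA^\top \;=\; \sigma_a\sigma_b\,\mathbf{u}_a\mathbf{u}_b^\top,
\]
and then verify two orthonormality statements. First, $\{\mathbf{e}_{ab}\}_{a,b=1}^{2K}$ is an orthonormal basis of $2K\times 2K$ matrices under the Frobenius inner product (trivial). Second, the candidate left vectors $\mathbf{f}_{ab} = \mathbf{u}_a\mathbf{u}_b^\top$ satisfy
\[
  \langle\mathbf{u}_a\mathbf{u}_b^\top,\mathbf{u}_c\mathbf{u}_d^\top\rangle_F \;=\; \mathrm{tr}(\mathbf{u}_b\mathbf{u}_a^\top\mathbf{u}_c\mathbf{u}_d^\top) \;=\; (\mathbf{u}_a^\top\mathbf{u}_c)(\mathbf{u}_d^\top\mathbf{u}_b) \;=\; \delta_{ac}\delta_{bd}.
\]
These two orthonormality facts, together with the identity above, constitute a valid SVD of $\bL$, and the singular value is $\lambda_{ab} = \sigma_a\sigma_b$, which simplifies to the claimed expression in \eqref{singv}.

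Finally, for the condition number I would extremize $\lambda_{ab}$ in $a,b$. The product $\lceil a/2\rceil\lceil b/2\rceil \rho^{\lceil a/2\rceil+\lceil b/2\rceil}$ is minimized when both ceilings equal $1$ (giving $\lambda_{\max} = N/(8\pi^2\rho^2)$) and maximized when both equal $K$ (giving $\lambda_{\min} = N/(8\pi^2 K^2\rho^{2K})$), so their ratio is $K^2\rho^{2(K-1)}$. I do not anticipate any real obstacle: the only thing to be careful about is the bookkeeping linking the index $a$ of the $2K\times 2K$ blocks with the block index $\lceil a/2\rceil$ in $\bD$, and verifying that the outer products $\mathbf{u}_a\mathbf{u}_b^\top$ are indeed Frobenius-orthonormal so that the computed triples form a genuine SVD rather than merely a diagonalization.
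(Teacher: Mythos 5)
Your proof is correct and rests on the same two facts as the paper's — the full-view orthogonality $\bC^\top\bC=\frac{N}{2}\mathbf{I}$ and the diagonality of $\bD$ — so it is essentially the same argument; you merely route it through the SVD of $\bA=\bC\bD$ and the outer-product lift, whereas the paper directly computes the Gram matrix $\langle\bL(\bU),\bL(\bV)\rangle=\frac{N^2}{4}\langle\bD\bU\bD,\bD\bV\bD\rangle$ on the canonical basis. Your version has the minor merit of making explicit the Frobenius-orthonormality of the left singular vectors $\mathbf{u}_a\mathbf{u}_b^\top$, which the paper leaves implicit.
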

\begin{proof}
  Using the fact that $\bC^\top \bC=\frac N 2\mbf {I}$, we have, for any square matrices $\bU$ and
  $\bV$,
  \begin{align}
    \label{eq:L_innerprod}
    \Seq{\bL(\bU),\bL(\bV)} = \frac{N^2}{4}\Seq{\bD \bU \bD, \bD \bV \bD},
  \end{align}
  where $\seq{\cdot,\cdot}$ is the termwise inner product. Since $\bD$ is diagonal and invertible,
  we conclude that the canonical basis $\set{\mathbf{e}_{ab}}_{a,b}$ is the singular vector of $\bL$, and the
  associated singular value is $\norm{\bL(\mathbf{e}_{ab})}_F = \norm{\bD \mathbf{e}_{ab} \bD}_F N/2 = N/(8\pi^2
  \ceil{a/2}\ceil{b/2}\rho^{\ceil{a/2}+\ceil{b/2}})$.
\end{proof}
As a simple consequence, we have $\pinv\bL(\bW)_{ab} =
\lambda_{ab}^{-1}\seq{\bW,f_{ab}}$. When $K$ is sufficiently
large, the truncation error $\bE$ is $O(\rho^{-K-2})$ and can be
neglected if compared to $\bW$ \cite{ammari_target_2012}, and then
by the property of white noise
\begin{align*}
  \sqrt{\Exp{\Paren{(\bMest)_{ab} - (\bM)_{ab}}^2}} \lesssim
  \sqrt{\Exp{\Paren{\pinv\bL(\bW)_{ab}^2}}}
  = \lambda_{ab}^{-1}\stdnoise ,
\end{align*}
which is the result already established in
\cite{ammari_target_2012}. Hence, it follows from (\ref{singv})
that the reconstruction of high order CGPTs is an ill-posed
problem. Nonetheless the system has the remarkable property that
low order CGPTs are not affected by the error caused by the
instability of higher orders as the following proposition shows.
\begin{prop}
  Let $\bM_K$ denote the CGPTs of order up to $K$, and let $\bL_K$ be the corresponding linear operator in
  \eqref{eq:MSR_CGPT_linsys_coeffwise}. Then, for any order $K_1\leq K_2<N/2$, the submatrix of
  $\bL_{K_2}^\dagger(\bV)$ formed by the first $2K_1$ columns and rows is identical to the minimal
  norm solution $\bL_{K_1}^\dagger(\bV)$.
\end{prop}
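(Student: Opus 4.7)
The plan is to invoke the explicit formula for the pseudo-inverse that was derived in the full-view setting, namely
\[
  \bL_K^\dagger(\bV) \;=\; \frac{4}{N^2}\,\bD_K^{-1}\,\bC_K^\top\,\bV\,\bC_K\,\bD_K^{-1},
\]
which is valid as long as $N\geq 2K$. Since we assume $K_1\leq K_2<N/2$, this formula applies for both $K_1$ and $K_2$, so the whole argument reduces to comparing the top-left $2K_1\times 2K_1$ blocks of these two expressions.

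The key structural observation is the block-inclusion relation between the matrices at orders $K_1$ and $K_2$. By the construction in \eqref{defDC}, the matrix $\bC_{K_1}$ is exactly the first $2K_1$ columns of $\bC_{K_2}$, and $\bD_{K_1}$ is exactly the top-left $2K_1\times 2K_1$ block of the diagonal matrix $\bD_{K_2}$ (equivalently, $\bD_{K_2}$ is block-diagonal with blocks $\bD_{K_1}$ and some additional diagonal block). I would write this inclusion explicitly as $\bC_{K_2}=[\bC_{K_1}\mid \bC']$ and $\bD_{K_2}=\operatorname{diag}(\bD_{K_1},\bD')$.

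From here the proof is a short computation. First, $\bC_{K_2}^\top \bV\,\bC_{K_2}$ is a $2K_2\times 2K_2$ matrix whose top-left $2K_1\times 2K_1$ block is precisely $\bC_{K_1}^\top \bV\,\bC_{K_1}$, because this block involves only the first $2K_1$ columns of $\bC_{K_2}$ on both sides. Second, multiplying on the left and right by the block-diagonal matrix $\bD_{K_2}^{-1}$ scales rows and columns independently; in particular, the top-left block is acted on only by $\bD_{K_1}^{-1}$ on each side. Hence the top-left $2K_1\times 2K_1$ block of $\bL_{K_2}^\dagger(\bV)$ equals
\[
  \tfrac{4}{N^2}\,\bD_{K_1}^{-1}\,\bC_{K_1}^\top\,\bV\,\bC_{K_1}\,\bD_{K_1}^{-1}
  \;=\;\bL_{K_1}^\dagger(\bV),
\]
which is exactly the claim.

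I do not anticipate a genuine obstacle here; the statement is essentially a bookkeeping consequence of the diagonal/orthogonal structure of the full-view pseudo-inverse, and of the fact that increasing the truncation order $K$ only appends new columns to $\bC$ and new diagonal entries to $\bD$. The only point that deserves a brief verification is the block-inclusion of $\bC_{K_1}$ inside $\bC_{K_2}$, which follows immediately from the definition $\bC_{rm}=(\cos m\theta_r,\sin m\theta_r)$ being independent of the truncation parameter. Care should be taken to note that the proposition does not claim the additional columns and rows of $\bL_{K_2}^\dagger(\bV)$ are small — only that the first $2K_1$ columns and rows coincide with the $K_1$-truncated minimal norm solution.
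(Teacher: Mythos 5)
Your proof is correct and follows essentially the same route as the paper: both arguments rest on the explicit full-view formula $\bL_K^\dagger(\bV)=\tfrac{4}{N^2}\bD_K^{-1}\bC_K^\top\bV\bC_K\bD_K^{-1}$ together with the nesting of $\bC_{K_1}$, $\bD_{K_1}$ inside $\bC_{K_2}$, $\bD_{K_2}$. The only cosmetic difference is that the paper encodes the submatrix extraction via selection matrices $\bJ_{K_1}$ and the identity $\bC_{K_2}\bD_{K_2}^{-1}\bJ_{K_1}=\bC_{K_1}\bD_{K_1}^{-1}$, whereas you carry out the same block computation directly.
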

\begin{proof}
  Let the $N\times 2K$ matrix $J_K$ be the row concatenation of the $2K\times 2K$ identity matrix $\bI_{2K}$ and
  a zero matrix.
We have $\bJ_K^\top \bJ_K=\bI_{2K}$ and $\bJ_{K_1}^\top
\bL_{K_2}^\dagger(\bV) \bJ_{K_1}$ is the submatrix of
$\bL_{K_2}^\dagger(\bV)$ formed by the first $2K_1$ columns and
rows. Let $\bD_K$  and $\bC_K$ be the matrices defined in
(\ref{defDC}). Because of (\ref{eq:pseudo_L_full_aov}), we have
$$
\bJ_{K_1}^\top \bL_{K_2}^\dagger(\bV) \bJ_{K_1} =  \frac{4}{N^2}
\bJ_{K_1}^\top \bD_{K_2}^{-1} \bC_{K_2}^\top \bV  \bC_{K_2}
\bD_{K_2}^{-1}  \bJ_{K_1}.
$$
One can easily see that
$$
\bC_{K_2} \bD_{K_2}^{-1}  \bJ_{K_1} = \bC_{K_1} \bD_{K_1}^{-1}.
$$
Thus, we have
$$
\bJ_{K_1}^\top \bL_{K_2}^\dagger(\bV) \bJ_{K_1} =
\bL_{K_1}^\dagger(\bV).
$$
\end{proof}
Numerically,  $\pinv\bL$ can be implemented through either the
formula \eqref{eq:pseudo_L_full_aov} or the Conjugated Gradient
(CG) method using  (\ref{eq:Least_square_CGPT_recon}). Simulations
in \cite{ammari_target_2012} confirm that in typical situations,
say, with $K=5$ and $10\%$ noise, the reconstructed CGPT is
sufficiently accurate for the task such as the target
identification in a dictionary. In the next section we present a
location and orientation tracking algorithm for a mobile target
based on the concept of CGPTs.

\section{Tracking of a mobile target}
\label{sec:tracking_of_mobile_target}

At the instant $t\geq 0$, we denote by $z_t=[x_t, y_t]^\top\in\R^2$ the location and
$\theta_t\in[0,2\pi)$ the orientation of a target $D_t$.
\begin{align}
  \label{eq:target_transrot_t}
  D_t=z_t+R_{\theta_t} D,
\end{align}
where $R_{\theta_t}$ is the rotation by $\theta_t$. Let $\bM_t$ be the CGPT of $D_t$,
and $\bMD$ be the CGPT of $D$. Then the equation \eqref{eq:MSR_CGPT_linsys} becomes:
\begin{align}
  \label{eq:MSR_CGPT_linsys_time}
  \bV_t = \bL (\bM_t) + \mbf E_t + \bW_t,
\end{align}
where $\bE_t$ is the truncation error, and $\bW_t$ the measurement noise at time $t$.

The objective of \emph{tracking} is to estimate the target's location $z_t$ and
orientation $\theta_t$ from the MSR data stream $\bV_t$. We emphasize that these
informations are contained in the first two orders CGPTs as shown in the previous
paper \cite{ammari_target_2012}. Precisely, let $\Delta x_t=x_t-x_{t-1}$, $\Delta
y_t=y_t-y_{t-1}$ and $\Delta \theta_t=\theta_t-\theta_{t-1}$, then the following
relations (when it is well defined) exist between the CGPT of $D_t$ and $D_{t-1}$
\cite{ammari_target_2012}:
\begin{equation}
  \label{eq:linsys_P1}
  \begin{aligned}
    \No_{12}(D_t)/\No_{11}(D_t) &= 2(\Delta x_t + i\Delta y_t) + e^{i\Delta\theta_t} \No_{12}(D_{t-1})/\No_{11}(D_{t-1}),\\
    \nm
    \Nt_{12}(D_t)/\Nt_{11}(D_t) &= 2(\Delta x_t + i\Delta y_t) + e^{i\Delta\theta_t} \Nt_{12}(D_{t-1})/\Nt_{11}(D_{t-1}).
  \end{aligned}
\end{equation}
Hence when the linear system \eqref{eq:linsys_P1} is solvable, one
can estimate $z_t, \theta_t$ by solving and accumulating $\Delta
x_t, \Delta y_t$ and $\Delta \theta_t$. However, such an algorithm
will propagate the error over time, since the noise presented in
data is not properly taken into account here.

In the following we develop a CGPT-based tracking algorithm using the Extended Kalman
Filter, which handles correctly the noise. We recall first the definition of complex
CGPT, with which a simple relation between $\bM_t$ and $\bMD$ can be established.

\subsection{Time relationship between CGPTs}
Let $\oov=(1, i)^\top$. The complex CGPTs $\No, \Nt$ are defined
by
\begin{align*}
  \No_{mn} &= (\Mcc_{mn} - \Mss_{mn}) + i (\Mcs_{mn} + \Msc_{mn}) = \oov^\top \bM_{mn} \oov, \\
  \Nt_{mn} &= (\Mcc_{mn} + \Mss_{mn}) + i (\Mcs_{mn} - \Msc_{mn}) = \oov^H \bM_{mn}
  \oov,
\end{align*}
where $H$ denotes the Hermitian transpose. Therefore, we have
\begin{align}
  \label{eq:CCGPT_CGPT_mat}
  \No = \bU^\top \bM \bU  \quad \mbox{ and } \quad \Nt = \bU^H \bM \bU,
\end{align}
where the matrix $\bU$ of dimension $2K\times K$ over the complex
fields is defined by
\begin{align}
  \label{eq:def_U_mat}
  \bU =\begin{pmatrix}
    \oov & 0 & \ldots & 0\\
    0 & \oov & \ldots & 0\\
    \vdots & ~ & \ddots & \vdots \\
    0 & \ldots & 0 & \oov
  \end{pmatrix} .
\end{align}
It is worth mentioning that $\No$ and $\Nt$ are complex matrices
of dimension $K\times K$.

To recover the CGPT $\bM_{mn}$ from the complex CGPTs $\No, \Nt$,
we simply use the relations
\begin{equation}
  \label{eq:CGPT_CCGPT_recovery}
  \begin{aligned}
    \Mcc_{mn} &= \frac{1}{2} \Re(\No_{mn} + \Nt_{mn}), \ \Mcs_{mn} = \frac{1}{2} \Im(\No_{mn} + \Nt_{mn}),\\
    \Msc_{mn} &= \frac{1}{2} \Im(\No_{mn} - \Nt_{mn}), \ \Mss_{mn} = \frac{1}{2} \Re(\Nt_{mn} -
    \No_{mn}),
  \end{aligned}
\end{equation}
where $\Re,\Im$ are the real and imaginary part of a complex
number, respectively. For two targets $D_t, D$ satisfying
\eqref{eq:target_transrot_t}, the following relationships between
their complex CGPT hold \cite{ammari_target_2012}:
\begin{subequations}
  \label{eq:CCGPT_tsr}
  \begin{align}
    \No(D_t) &= \Fmat^\top \No(D) \Fmat, \\
    \Nt(D_t) &= \Fmat^H \Nt(D) \Fmat ,
  \end{align}
\end{subequations}
where $\Fmat$ is a upper triangle matrix with the $(m,n)$-th entry given by
\begin{align}
  \label{eq:Fmat_mn}
  (\Fmat)_{mn} = \binom{n}{m} {(x_t + i y_t)}^{n-m}e^{im\theta_t} .
\end{align}

\paragraph{Linear operator $\mbf T_t$:}
Now one can find explicitly a linear operator $\mbf T_t$ (the
underlying scalar field is $\R$) which depends only on $z_t,
\theta_t$, such that $\bM_t=\mbf T_t(\bMD)$, and the equation
\eqref{eq:MSR_CGPT_linsys_time} becomes
\begin{align}
  \label{eq:MSR_CGPT_linsys_time_Tt}
  \bV_t = \bL(\bT_t(\bMD)) + \bE_t + \bW_t .
\end{align}
For doing so, we set $\Jmat := \bU\Fmat$, where $\bU$ is given by
(\ref{eq:def_U_mat}). Then, a straightforward computation using
\eqref{eq:CCGPT_CGPT_mat}, \eqref{eq:CGPT_CCGPT_recovery}, and
\eqref{eq:CCGPT_tsr}  shows that
\begin{equation}
  \label{eq:CGPT_tsr}
  \begin{aligned}
    \Mcc(D_t) &= \Re \Jmat^\top \bMD \Re \Jmat, \ \Mcs(D_t) = \Re \Jmat^\top \bMD \Im \Jmat, \\
    \Msc(D_t) &= \Im \Jmat^\top \bMD \Re \Jmat, \ \Mss(D_t) = \Im \Jmat^\top \bMD \Im
    \Jmat,
  \end{aligned}
\end{equation}
where $\Mcc(D_t),\Mcs(D_t),\Msc(D_t),\Mss(D_t)$ are defined in
(\ref{eq:MSR_CGPT_linsys_coeffwise}). Therefore, we get the
operator $\mbf T_t$:
\begin{align}
  \label{eq:def_op_Tt}
  \mbf T_t(\bMD) =\ &\Re \bU (\Re \Jmat^\top \bMD \Re \Jmat) \Re \bU^\top + \Re \bU (\Re \Jmat^\top
  \bMD \Im \Jmat) \Im \bU^\top + \notag \\
  &\Im \bU (\Im \Jmat^\top \bMD \Re \Jmat) \Re \bU^\top + \Im \bU (\Im \Jmat^\top \bMD \Im \Jmat) \Im
  \bU^\top = \bM_t .
\end{align}

\subsection{Tracking by the Extended Kalman Filter}
\label{sec:track-extend-kalm} The EKF is a generalization of the
KF to nonlinear dynamical systems. Unlike KF which is an optimal
estimator for linear systems with Gaussian noise, EKF is no longer
optimal, but it remains robust with respect to noise and
computationally inexpensive, therefore is well suited for
real-time applications such as tracking. We establish here the
\emph{system state} and the \emph{observation} equations which are
fundamental to EKF, and refer readers to Appendix
\ref{sec:extend-kalm-filt} for its algorithmic details.

\subsubsection{System state observation equations}
\label{sec:system_state_obsrv_eq}

We assume that the position of the target is subjected to an external driving force
that has the form of a white noise. In other words the velocity $({V}(\tau))_{\tau
  \in \R^+}$ of the target is given in terms of a two-dimensional Brownian motion
$({W}_a(\tau))_{\tau \in \R^+}$ and its position $({Z}(\tau))_{\tau \in \R^+}$ is
given in terms of the integral of this Brownian motion:
$$
{V}(\tau) = {V}_0 + \sigma_a  { W}_a(\tau), \quad \quad {Z}(\tau) = {Z}_0 + \int_0^\tau {V}(s) ds .
$$
The orientation  $(\Theta(\tau))_{\tau \in \R^+}$ of the target is subjected to random fluctuations and
its angular velocity is
given in terms of an independent white noise, so that the orientation
is  given in terms  of a one-dimensional Brownian motion $(W_\theta(\tau))_{\tau \in \R^+}$:
$$
\Theta(\tau) = \Theta_0+ \sigma_\theta W_\theta(\tau) .
$$
We observe the target at discrete times $t \Delta \tau$, $t \in \NN$, with time step $\Delta \tau$.
We denote ${z}_t=  {Z}(t \Delta \tau)$, ${v}_t =  {V}(t \Delta \tau)$, and $\theta_t=  \Theta(t \Delta \tau)$.
They obey the recursive relations
\begin{equation} \label{eq:move_model}
  \begin{array}{ll}
\dis  {v}_t = {v}_{t-1}+ {a}_{t} ,
&\quad \dis {a}_{t}= \sigma_a  \big( {W}_a(t\Delta \tau) -{W}_a((t-1)\Delta \tau) \big), \\
\dis  {z}_t = {z}_{t-1}+ {v}_{t-1} \Delta \tau + {b}_{t} , &\quad
\dis {b}_{t}= \sigma_a  \int_{(t-1) \Delta \tau}^{t \Delta \tau}
{W}_a(s) -{W}_a((t-1)\Delta \tau)ds , \\
\dis  \theta_t = \theta_{t-1}+ c_{t} , &\quad \dis c_{t}=
\sigma_\theta  \big( W_\theta(t\Delta \tau) -W_\theta((t-1)\Delta
\tau) \big) .
\end{array}
\end{equation}
 Since the increments of the Brownian motions are independent from each other,
 the vectors $(U_t)_{t \geq 1}$ given by
 $$
 U_t=
 \begin{pmatrix}
 {a}_t\\
 {b}_t\\
 c_t\end{pmatrix}
 $$
 are independent and identically distributed
 with the multivariate normal distribution with mean zero and
 covariance matrix $\bSigma$ given by
 \begin{equation} \label{eq:noise_cov_matrix}
 \bSigma= \Delta \tau
 \begin{pmatrix}
 \sigma_a^2 {\bf I}_2 & \frac{\sigma_a^2}{2} \Delta \tau {\bf I}_2 &0\\
 \frac{\sigma_a^2}{2} \Delta \tau {\bf I}_2 & \frac{\sigma_a^2}{3} \Delta \tau^2 {\bf I}_2 & 0\\
 0 & 0 & \sigma_\theta^2
 \end{pmatrix}
 \end{equation}
 The evolution of the state vector
 $$
 X_t =
 \begin{pmatrix}
 {v}_t\\
 {z}_t\\
 \theta_t
 \end{pmatrix}
 $$
 takes the form
 \begin{equation} \label{eq:sys_state_matrix_form}
 X_t = \bF X_{t-1} + U_{t}, \quad \quad
 \bF=
 \begin{pmatrix}
 \bI_2 & 0 & 0\\
 \Delta \tau \bI_2 & \bI_2 & 0 \\
 0 & 0 & 1
 \end{pmatrix}
 \end{equation}

The observation made at time $t$ is the MSR matrix given by
\eqref{eq:MSR_CGPT_linsys_time_Tt}, where the system state $X_t$
is implicitly included in the operator $\bT_t$. We suppose that
the truncation error $\bE_t$ is small compared to the measurement
noise so that it can be dropped in
\eqref{eq:MSR_CGPT_linsys_time_Tt}, and that the Gaussian white
noise $\bW_t$ of different time are mutually independent. We
emphasize that the velocity vector $v_t$ of the target does not
contribute to \eqref{eq:MSR_CGPT_linsys_time_Tt}, which can be
seen from \eqref{eq:target_transrot_t}. To highlight the
dependence upon $z_t, \theta_t$, we introduce a function $h$ which
is nonlinear in $z_t, \theta_t$, and takes $\bMD$ as a parameter,
such that
\begin{align}
  \label{eq:linsys_func_h}
  h(X_t; \bMD) = h(z_t, \theta_t;\bMD) = \bL(\bT_t(\bMD)).
\end{align}
Then together with \eqref{eq:sys_state_matrix_form} we get the following \emph{system state} and
\emph{observation} equations:
\begin{subequations}
  \label{eq:sys_state_obs_eq}
  \begin{align}
      X_t &= \bF X_{t-1} + U_t , \label{eq:system_eq} \\
      \bV_t &= h(X_t; \bMD) + \bW_t .\label{eq:observ_eq}
  \end{align}
\end{subequations}
Note that \eqref{eq:system_eq} is linear, so in order to apply EKF on \eqref{eq:sys_state_obs_eq},
we only need to linearize \eqref{eq:observ_eq}, or in other words, to calculate the partial
derivatives of $h$ with respect to $x_t, y_t, \theta_t$.

\subsubsection{Linearization of the observation equation}
\label{sec:lin_obs_eq}

Clearly, the operator $\bL$ contains only the information
concerning the acquisition system and does not depend on $x_t,
y_t, \theta_t$. So by \eqref{eq:linsys_func_h}, we have
\begin{align}
  \label{eq:partial_h}
  \partial_{x_t}h=\bL(\partial_{x_t}\bT_t(\bMD)),
\end{align}
while the calculation for $\partial_{x_t}\bT_t$ is straightforward using
\eqref{eq:def_op_Tt}. We have 
\begin{align}
  \label{eq:derivatives_Tt}
  \partial_{x_t}\bT_t(\bMD) = &\Re \bU \partial_{x_t}(\Re \Jmat^\top \bMD \Re \Jmat) \Re \bU^\top +
  \Re \bU \partial_{x_t}(\Re \Jmat^\top \bMD \Im \Jmat) \Im \bU^\top + \notag \\
  &\Im \bU \partial_{x_t}(\Im \Jmat^\top \bMD \Re \Jmat) \Re \bU^\top + \Im \bU \partial_{x_t}(\Im
  \Jmat^\top \bMD \Im \Jmat) \Im \bU^\top ,
\end{align}
where the derivatives are found by the chain rule:
\begin{align*}
\partial_{x_t}(\Re \Jmat^\top \bMD \Re \Jmat) &= \Re (\partial_{x_t}\Jmat^\top) \bMD \Re
\Jmat+\Re\Jmat^\top \bMD \Re(\partial_{x_t}\Jmat),\\
\partial_{x_t}(\Re \Jmat^\top \bMD \Im \Jmat) &= \Re (\partial_{x_t}\Jmat^\top) \bMD
\Im \Jmat+\Re\Jmat^\top \bMD \Im(\partial_{x_t}\Jmat),\\
\partial_{x_t}(\Im \Jmat^\top \bMD \Re \Jmat) &= \Im (\partial_{x_t}\Jmat^\top)
\bMD \Re \Jmat+\Im\Jmat^\top \bMD \Re(\partial_{x_t}\Jmat),\\
\partial_{x_t}(\Im \Jmat^\top \bMD \Im \Jmat) &= \Im (\partial_{x_t}\Jmat^\top)
 \bMD \Im \Jmat+\Im\Jmat^\top \bMD \Im(\partial_{x_t}\Jmat),
\end{align*}
and $\partial_{x_t}\Jmat = \bU\partial_{x_t} \mathbf{F}_t$. The
$(m,n)$-th entry of the matrix $\partial_{x_t} \mathbf{F}_t$ is
given by
\begin{align}
  \label{derv_z_Ft}
  (\partial_{x_t} \mathbf{F}_t)_{m,n} = \binom{n}{m} (n-m) z_t^{n-m-1}
  e^{im\theta_t}.
\end{align}
The derivatives $ \partial_{y_t}\bT_t(\bMD)$ and $\partial_{\theta_t}\bT_t(\bMD)$ are calculated
in the same way.

\subsection{Numerical experiments of tracking in the full-view setting}
\label{sec:tracking_numexp_full_view}

Here we show the performance of EKF in a full angle of view
setting with the shape 'A' as target $D$, which has diameter 10
and is centered at the origin. The path $(z_t, \theta_t)$ is
simulated according to the model \eqref{eq:move_model} during a
period of 10 seconds ($\Delta \tau=0.01$), with parameters
$\sigma_a=2, \sigma_\theta=0.5$, and the initial state $X_0=
(v_0,z_0,\theta_0)^\top =(-1, 1, 5, -5, 3\pi/2)^\top$. We make
sure that the target is always included inside the measurement
circle on which $N=20$ sources/receivers are fixed, see
Fig.~\ref{fig:target_path}. The data stream $\bV_t$ is generated
by first calculating the MSR matrix corresponding to each $D_t,
t\geq 0$ then adding a white noise.

Suppose that the CGPT of $D$ is correctly determined (for
instance, by identifying the target in a dictionary
\cite{ammari_target_2012}). Then we use the first two orders CGPT
$\bMD$ of $D$ in \eqref{eq:observ_eq}, and take $(0, 0, 10, -0.5,
0)^\top$ as initial guess of $X_0$ for EKF.

We add $10\%$ and $20\%$ of noise to data, and show the results of tracking in
Fig.~\ref{fig:tracking_big_small_target} (a) (c) and (e). We see that EKF can find
the true system state, despite of the poor initial guess, and the tracking precision
decays as the measurement noise level gets higher. The same experiment with small
target (of same shape) of diameter 1 is repeated in
Fig.~\ref{fig:tracking_big_small_target} (b) (d) and (f), where the tracking of
position remains correct, on the contrary, that of orientation fails when the noise
level is high. Such a result is in accordance with physical intuitions. In fact, the
position of a small target can be easily localized in the far field, while its
orientation can be correctly determined only in the near field.


\begin{figure}[htp]
  \centering
  \includegraphics[width=7.5cm]{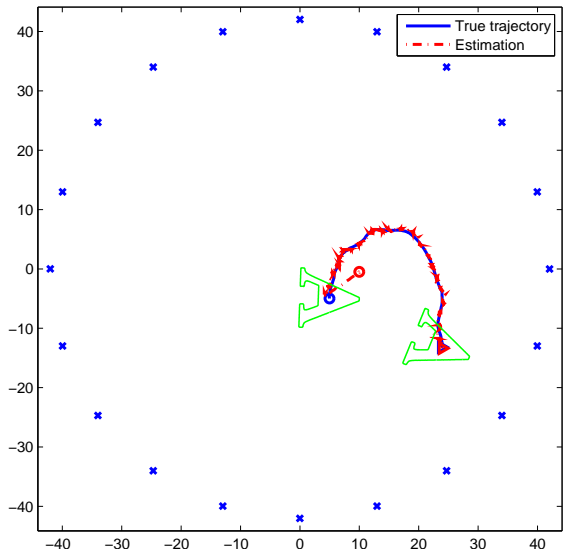}
  \caption{Trajectory of the letter 'A' and the estimation by EKF. The initial position is $(5, -5)$
    while the initial guess given to EKF is $(10, -0.5)$. The crosses indicate the position of
    sources/receivers, while the circle and the triangle indicate the starting and the final
    position of the target, respectively. In blue is the true trajectory and in red the estimated one.}
  \label{fig:target_path}
\end{figure}


\begin{figure}[htp]
  \centering
  \subfigure[]{\includegraphics[width=6.5cm]{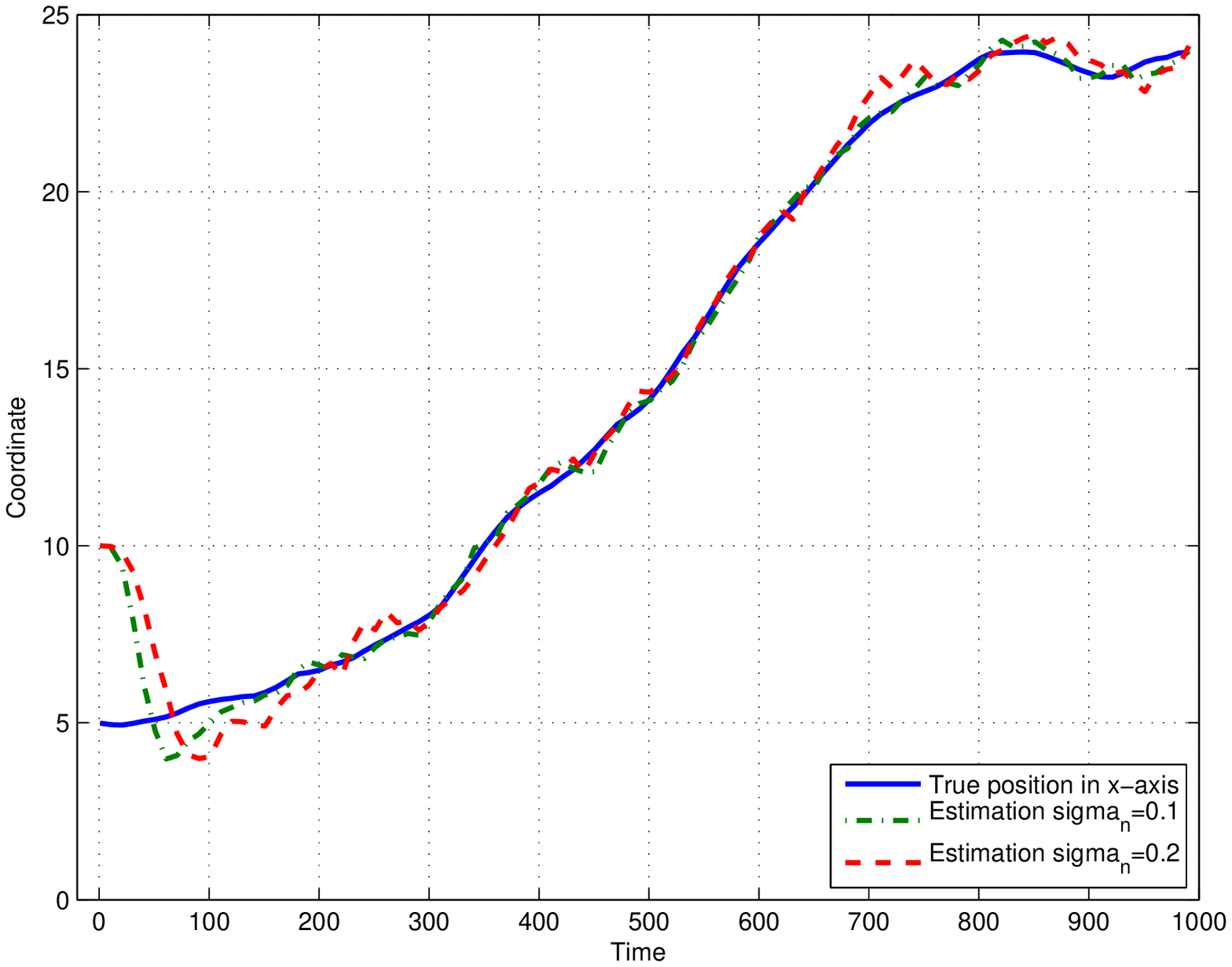}}
  \subfigure[]{\includegraphics[width=6.5cm]{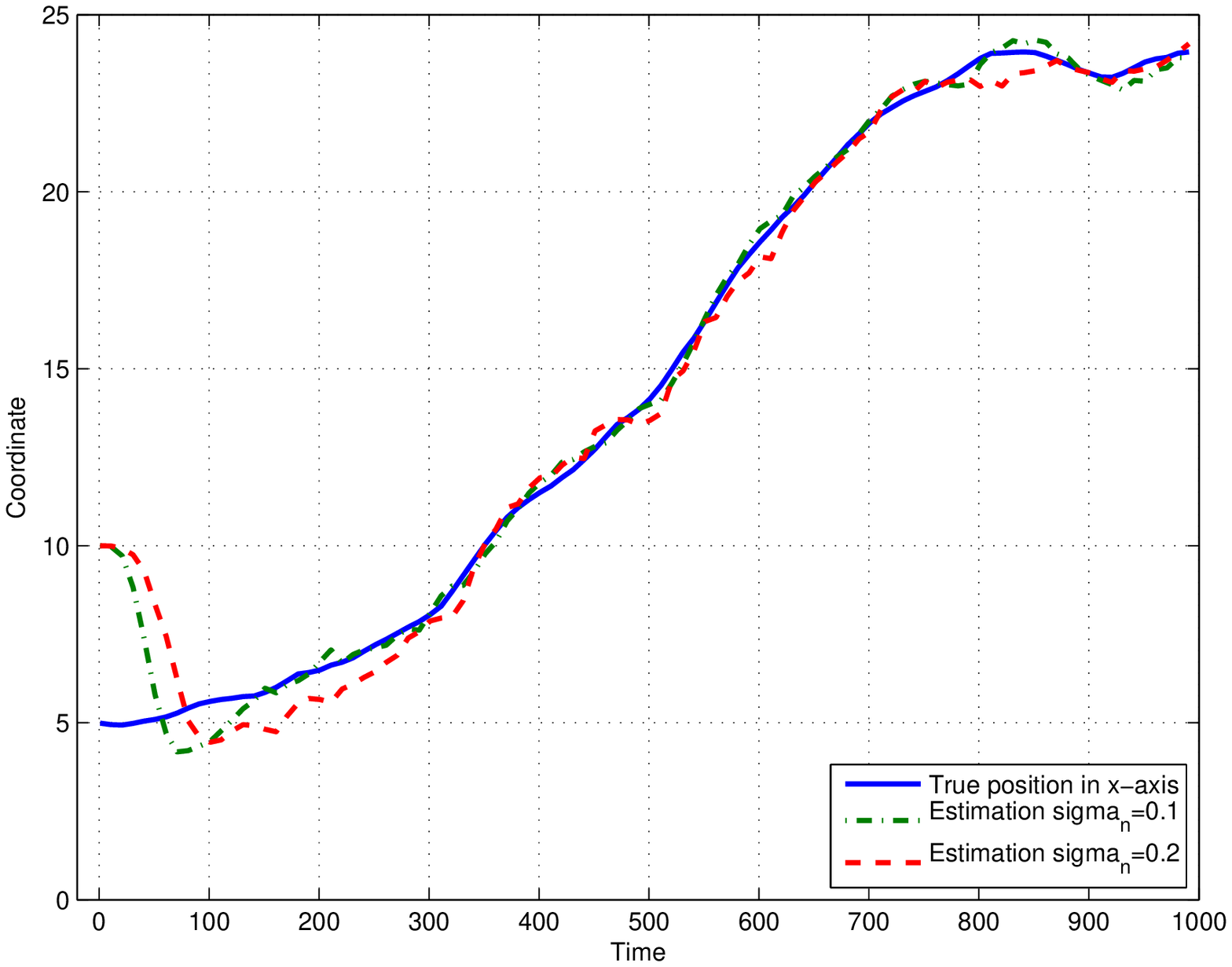}}
  \subfigure[]{\includegraphics[width=6.5cm]{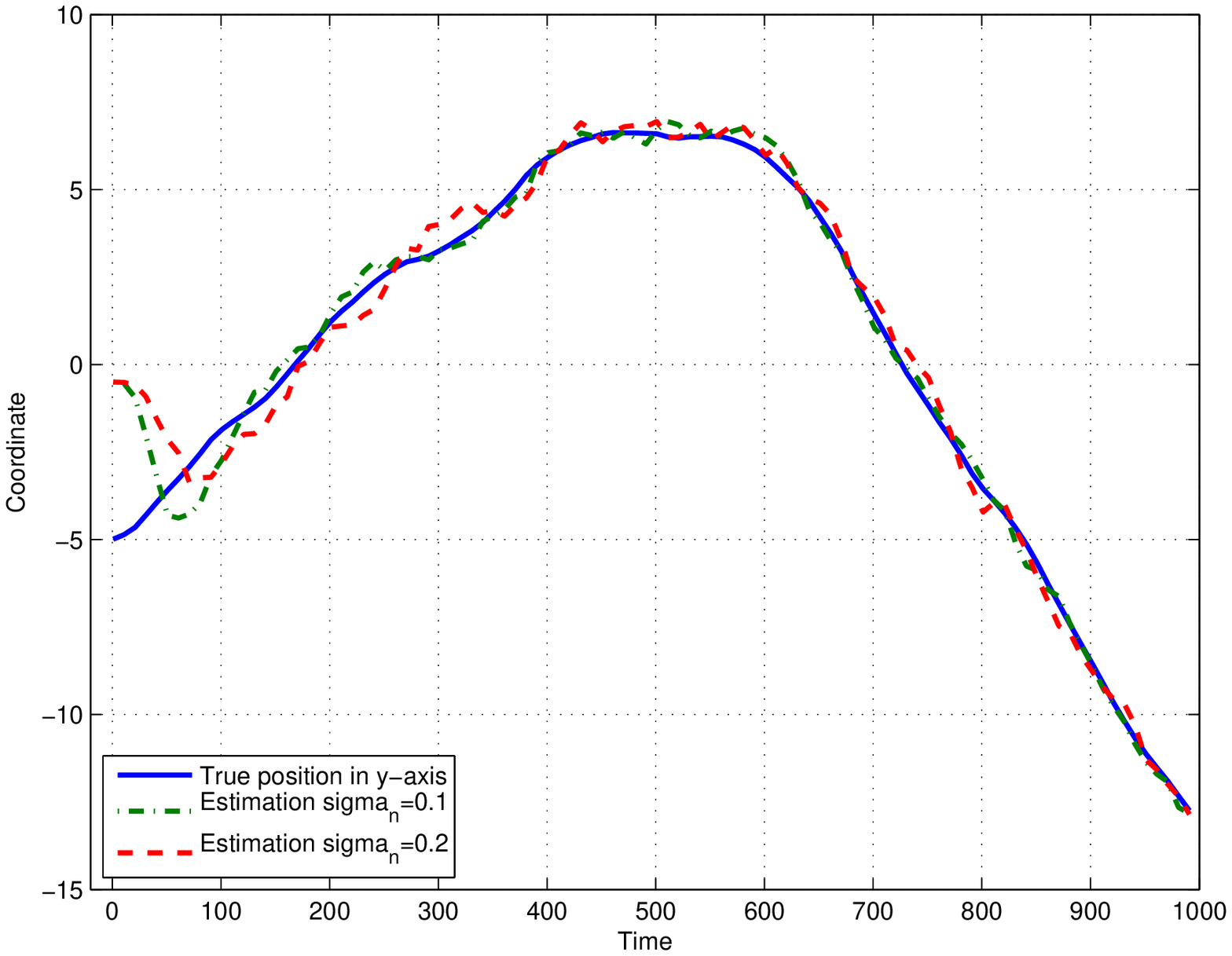}}
  \subfigure[]{\includegraphics[width=6.5cm]{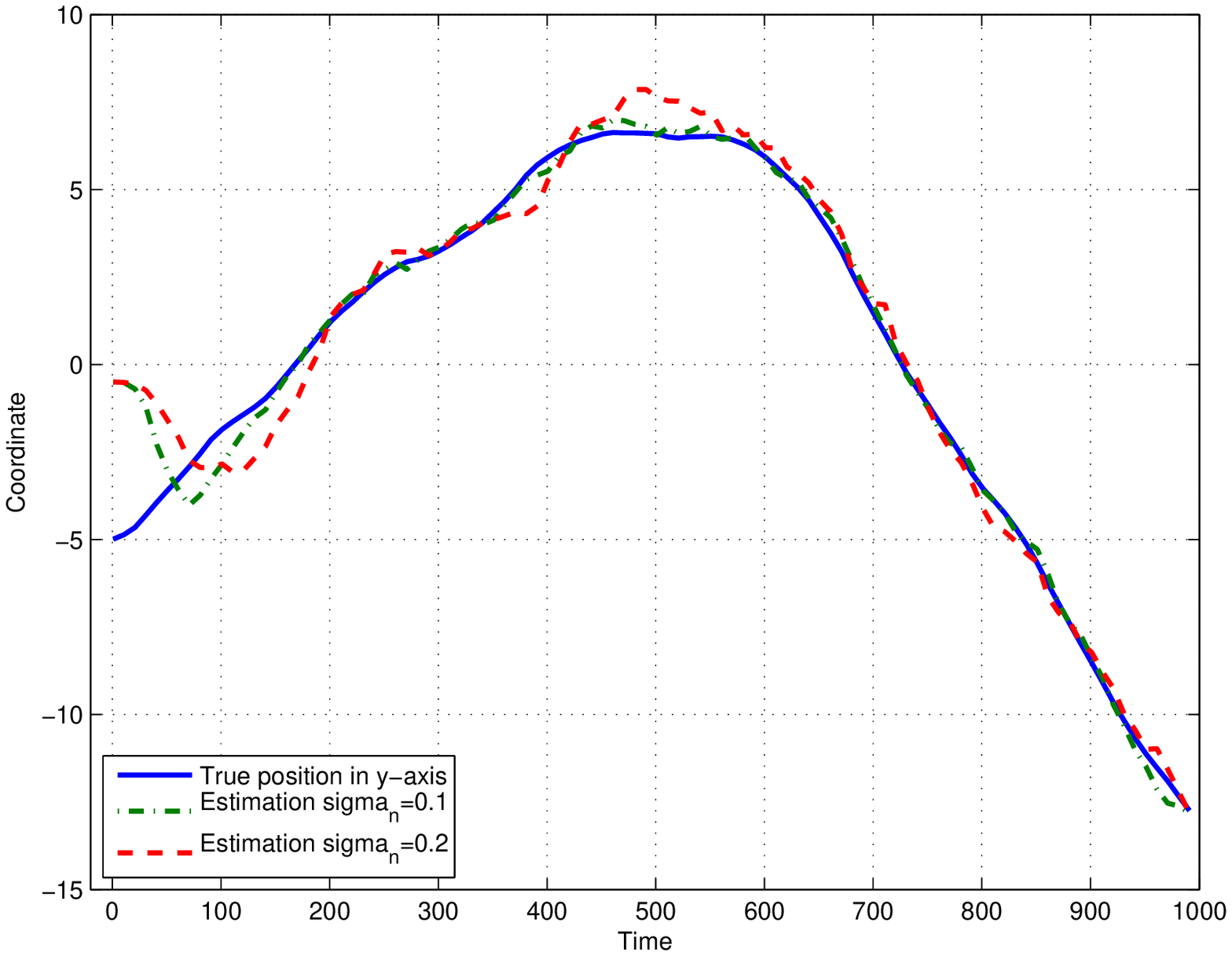}}
  \subfigure[]{\includegraphics[width=6.5cm]{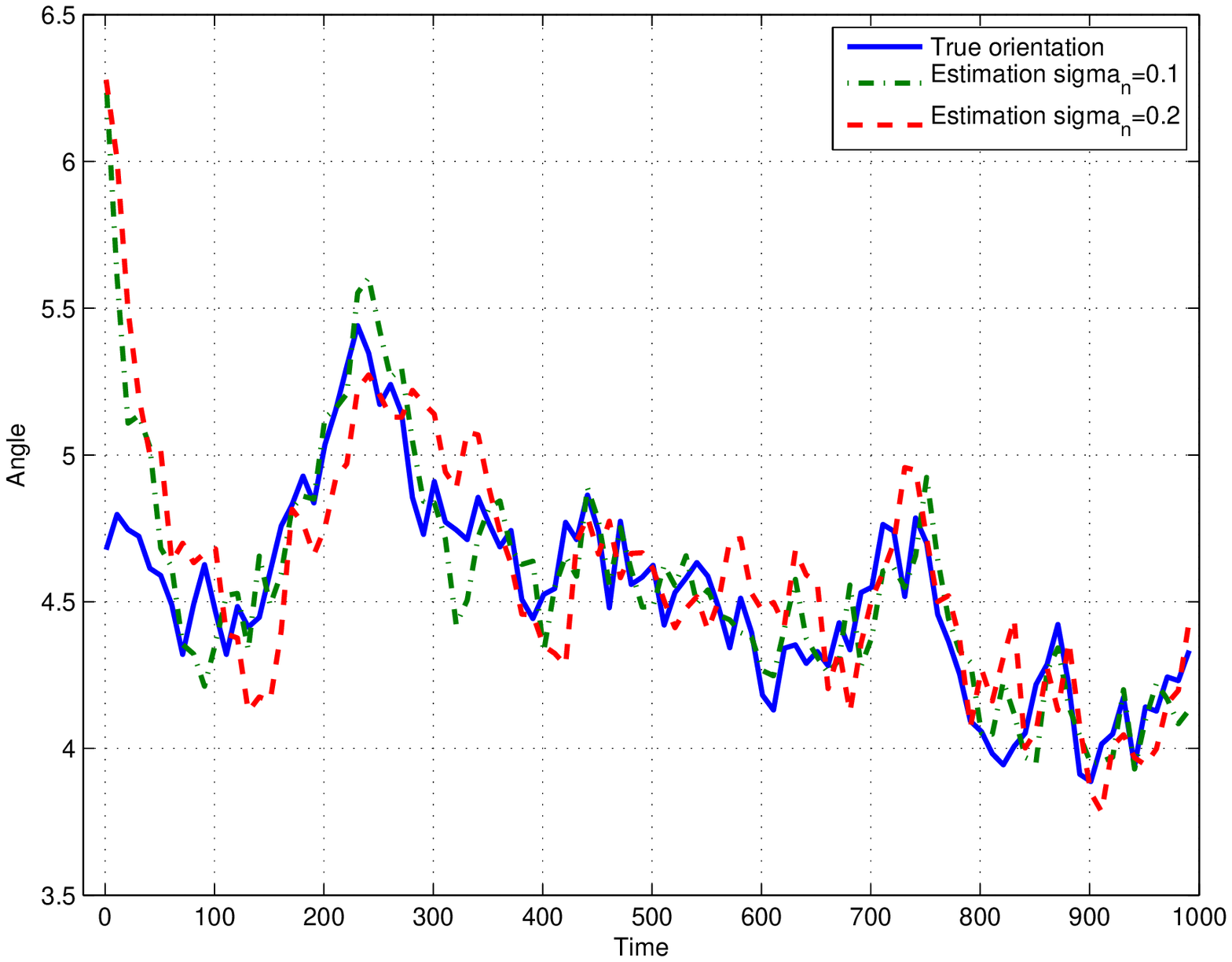}}
  \subfigure[]{\includegraphics[width=6.5cm]{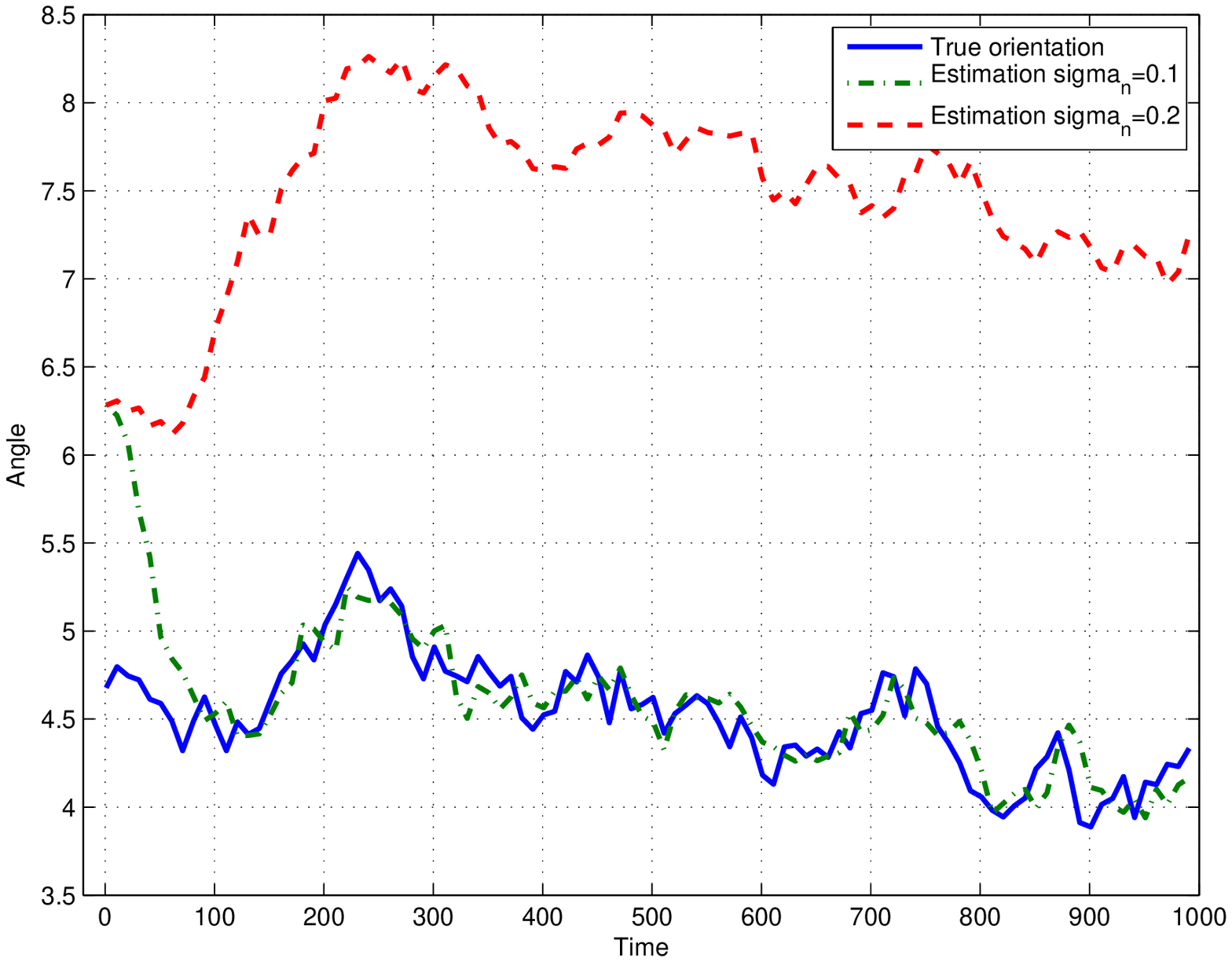}}
  \caption{Results of tracking using the configuration of Fig.~\ref{fig:target_path} at different
    noise levels.  First row: coordinate in $x$-axis. Second row: coordinate in $y$-axis. Last row:
    orientation.  In the first column the target has size $10$, while in the second column the
    target has size 1. The solid line always indicates the true system state.}
  \label{fig:tracking_big_small_target}
\end{figure}

\section{CGPT reconstruction and tracking problem in the limited-view setting}
\label{sec:limited_angle_view}

In this section we study the stability of CGPTs reconstruction and
tracking problem in the case $0<\gamma<2\pi$, always under the
condition that $N>2K$, {\it i.e.}, the number of sources/receivers
is two times larger than the highest order of CGPTs to be
reconstructed. Unlike in the full-view case, here $\bC$ is no
longer orthogonal in general, nonetheless one can still establish
the SVD of $\bL$ similarly as in Proposition
\ref{prop:full-angle-view-svd}.
\begin{prop}
  Consider the concentric and limited-view setting with $N\geq 2K$, and suppose that $\bC$ is
  of maximal rank. Let $\set{\mu_n}$ be the $n$-th largest eigenvalue of the matrix $\bD \bC^\top \bC \bD$ and
  let $\set{v_n}$ be the associated orthonormal eigenvector.
  Then the $(a,b)$-th singular
  value of the operator $\bL$ is $\lambda_{ab}=\sqrt{\mu_a\mu_b}$, with the associated left singular
  vector the matrix $\mathbf{g}_{ab}=v_a v_b^\top$. In particular, the condition
  number of the operator $\bL$ is
  \begin{align}
    \label{eq:cond_L_lim_aov}
    \cond\bL = \cond{\bD\bC^\top\bC\bD} \leq \cond\bC^2
    K^2\rho^{2(K-1)},
  \end{align}
with $\cond\bC$ being the condition number of the matrix $\bC$.
\end{prop}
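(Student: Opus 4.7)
The plan is to exploit the bilinear structure $\bL(\bM) = \bA \bM \bA^\top$ with $\bA = \bC\bD$, so that the SVD of $\bL$ reduces to the spectral analysis of the single symmetric positive semi-definite matrix $\bS := \bA^\top \bA = \bD\bC^\top\bC\bD$. This extends the proof of Proposition~\ref{prop:full-angle-view-svd}, which corresponds to the special case where $\bC^\top\bC$ is a scalar multiple of the identity, so that $\bS$ is already diagonal in the canonical basis.

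First, I would compute the adjoint of $\bL$ relative to the Frobenius inner product. Using $\langle \bX, \bY \rangle_F = \mathrm{tr}(\bX^\top \bY)$ together with cyclicity of the trace gives $\bL^*(\bY) = \bA^\top \bY \bA$, and hence $\bL^*\bL(\bM) = \bS \bM \bS$. Next, the maximal rank hypothesis on $\bC$ ensures that $\bS$ is symmetric positive definite, with orthonormal eigenbasis $\{v_n\}_{n=1}^{2K}$ and strictly positive eigenvalues $\mu_n$. A direct computation yields
\[
\bL^*\bL(v_a v_b^\top) \;=\; \bS\, v_a v_b^\top\, \bS \;=\; \mu_a \mu_b\, v_a v_b^\top,
\]
so the rank-one matrices $\{v_a v_b^\top\}$ are eigenvectors of $\bL^*\bL$ with eigenvalues $\mu_a \mu_b$. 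They form an orthonormal family for the Frobenius inner product because $\langle v_a v_b^\top, v_c v_d^\top\rangle_F = (v_a^\top v_c)(v_b^\top v_d) = \delta_{ac}\delta_{bd}$, and there are exactly $(2K)^2$ of them, so they form a complete orthonormal basis. Taking square roots of the eigenvalues then identifies the singular values of $\bL$ as $\lambda_{ab} = \sqrt{\mu_a \mu_b}$, with $v_a v_b^\top$ as the associated singular vectors on the input side, as stated.

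For the condition-number bound, the extremal singular values of $\bL$ correspond to the index pairs $(a,b)=(1,1)$ and $(a,b)=(2K,2K)$, which gives $\cond\bL = \mu_1/\mu_{2K} = \cond\bS$. Rewriting $\bS = (\bC\bD)^\top(\bC\bD)$ yields $\cond\bS = \cond(\bC\bD)^2$, and submultiplicativity of the condition number combined with the invertibility of $\bD$ gives $\cond(\bC\bD) \leq \cond\bC \cdot \cond\bD$. Since $\bD$ is diagonal with entries $1/(2\pi k\rho^k)$ for $k=1,\dots,K$, each repeated twice, the ratio of its largest to smallest diagonal entry is $\cond\bD = K\rho^{K-1}$, and the inequality $\cond\bL \leq \cond\bC^2\, K^2\rho^{2(K-1)}$ follows immediately.

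There is no deep obstacle here; the argument is essentially bookkeeping on top of the spectral theorem. The only point requiring a small amount of care is that $\bC$ is a tall rectangular matrix, so one must invoke the maximal-rank hypothesis to guarantee that $\bS$ is invertible and all $\lambda_{ab}$ are strictly positive, so that $\cond\bL$ is finite and equals $\cond\bS$. The rest is the observation that the tensor-product eigenbasis $v_a v_b^\top$ diagonalizes the "sandwich'' operator $\bM\mapsto \bS\bM\bS$, a pattern that would extend verbatim to any operator of the form $\bM \mapsto \bA\bM\bB^\top$ with $\bA,\bB$ of maximal rank.
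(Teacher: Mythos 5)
Your proof is correct and follows essentially the same route as the paper: the paper's key identity $\langle \bL(\bU),\bL(\bV)\rangle = \langle \bU, (\bD\bC^\top\bC\bD)\bV(\bD\bC^\top\bC\bD)\rangle$ is exactly your statement that $\bL^*\bL(\bM)=\bS\bM\bS$, and both arguments then read off the singular values from the eigenpairs of $\bS=\bD\bC^\top\bC\bD$ and bound the condition number via $\sigma_{\max}(\bC\bD)\le\sigma_{\max}(\bC)\sigma_{\max}(\bD)$ and $\sigma_{\min}(\bC\bD)\ge\sigma_{\min}(\bC)\sigma_{\min}(\bD)$. Your explicit remarks on completeness of the basis $\{v_av_b^\top\}$ and on the role of the maximal-rank hypothesis are welcome additions but do not change the argument.
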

\begin{proof}
  We first note that for any matrices $\bU, \bV$ we have:
  \begin{align*}
    \seq{\bL(\bU), \bL(\bV)} = \seq{\bU, (\bD\bC^\top\bC\bD) \bV
    (\bD\bC^\top\bC\bD)}.
  \end{align*}
   Taking $\mathbf{g}_{ab}=v_av_b^\top$, and
  $\mathbf{g}_{a'b'}=v_{a'}v_{b'}^\top$, we get
  \begin{align*}
    \seq{\bL(\mathbf{g}_{ab}), \bL(\mathbf{g}_{a'b'})} &= \mu_{a'}\seq{v_av_b^\top, v_{a'}v_{b'}^\top(\bD
      \bC^\top\bC\bD)} = \mu_{a'}\mu_{b'}\seq{v_av_b^\top,
      v_{a'}v_{b'}^\top} \\
    &= \delta_{aa'}\delta_{bb'}\mu_a\mu_b,
  \end{align*}
 where $\delta_{aa'}$ is the Kronecker's symbol, which implies that $\norm{\bL(\mathbf{g}_{ab})}_F
 = \sqrt{\mu_a\mu_b}$ is the $(a,b)$-th singular value of
  $\bL$. We denote by $\rho_{\text{max}}(\cdot), \rho_{\text{min}}(\cdot)$ the maximal and the minimal
  singular values of a matrix, then
  \begin{align*}
    \rho_{\text{max}}(\bD \bC^\top \bC \bD) &= \rho_{\text{max}}(\bC \bD)^2
    \leq\rho_{\text{max}}(\bC)^2 \rho_{\text{max}}(\bD)^2,\\
    \rho_{\text{min}}(\bD \bC^\top \bC \bD) &= \rho_{\text{min}}(\bC
     \bD)^2 \geq\rho_{\text{min}}(\bC)^2 \rho_{\text{min}}(\bD)^2,
  \end{align*}
  and the condition number of $\bL$ is therefore bounded by $\cond\bC^2 K^2\rho^{2(K-1)}$.
\end{proof}

\subsection{Injectivity of $\bC$}
\label{sec:injectivity-c}

We denote by $V_K$ the vector space of functions of the form
\begin{align}
  \label{eq:fourier_serie_complex}
  f(\theta)= \sum_{k=-K}^K c_ke^{ik\theta},
\end{align}
with $c_k\in\C$, and $V_K^0$ the subspace of $V_K$ such that
$c_0=0$. Functions of $V^0_K$ can be written as
\begin{align}
  \label{eq:fourier_serie}
  f(\theta)=\sum_{k=1}^K
  \alpha_k\cos(k\theta)+\beta_k\sin(k\theta),
\end{align}
with $\alpha_k, \beta_k\in\C$.
Observe that taking discrete samples of \eqref{eq:fourier_serie}
at $\theta_s=\gamma s/N$ is nothing but applying the matrix $\bC$
on a coefficient vector
$(\alpha_1,\beta_1\ldots\alpha_K,\beta_K)$. We have the following
result.
\begin{prop}
  For any $N\geq 2K$, the matrix $\bC$ is of maximal rank.
\end{prop}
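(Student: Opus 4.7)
The plan is to interpret $\bC$ as the operator that samples a function $f\in V_K^0$ at the nodes $\theta_s=\gamma s/N$, so that a vector in the kernel of $\bC$ corresponds to an $f\in V_K^0$ with $f(\theta_s)=0$ for $s=1,\ldots,N$, and showing maximal rank amounts to showing $f\equiv 0$ in that situation.

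First I would substitute $z=e^{i\theta}$ and set $p(z)=z^K f(\theta)$. Using the expansion \eqref{eq:fourier_serie_complex}, one has $p(z)=\sum_{j=0}^{2K} c_{j-K}z^j$, an ordinary polynomial in $z$ of degree at most $2K$, and the defining constraint $c_0=0$ of $V_K^0$ translates precisely into the vanishing of the $z^K$ coefficient of $p$. The hypothesis $f(\theta_s)=0$ then means that $p$ vanishes at the $N$ distinct unit-circle points $z_s=e^{i\theta_s}$. When $N\ge 2K+1$ this is already incompatible with $\deg p\le 2K$ unless $p\equiv 0$, and one concludes $f\equiv 0$ at once.

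The delicate case is $N=2K$. If $\deg p<2K$ the counting still gives $p\equiv 0$, so I may assume $\deg p=2K$, whence $p$ is determined up to a nonzero scalar by its simple roots and
\begin{equation*}
  p(z)=c\prod_{s=1}^{2K}(z-\omega^s),\qquad \omega:=e^{i\gamma/(2K)}.
\end{equation*}
The vanishing of the $z^K$ coefficient then reduces to the single scalar identity $e_K(\omega,\omega^2,\ldots,\omega^{2K})=0$, where $e_K$ denotes the $K$-th elementary symmetric polynomial.

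The main obstacle is to show that this last identity actually fails when $\gamma<2\pi$. For this I would invoke the $q$-binomial identity
\begin{equation*}
  e_K(\omega,\omega^2,\ldots,\omega^{2K}) = \omega^{K(K+1)/2}\prod_{j=1}^{K}\frac{1-\omega^{K+j}}{1-\omega^{j}},
\end{equation*}
which reduces everything to checking that $\omega^m\neq 1$ for every $m\in\{1,\ldots,2K\}$. Since $\omega^m=e^{im\gamma/(2K)}$, the equality $\omega^m=1$ is equivalent to $\gamma$ being a positive integer multiple of $4\pi K/m$, and for $m\le 2K$ this forces $\gamma\ge 4\pi K/(2K)=2\pi$, which is excluded by the limited-view hypothesis $\gamma<2\pi$. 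Hence $e_K\neq 0$, the case $\deg p=2K$ is impossible, and so $p\equiv 0$ and $f\equiv 0$, giving $\bC$ the full column rank $2K$.
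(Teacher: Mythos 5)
Your proof is correct, and while it starts from the same reduction as the paper (interpret $\bC$ as sampling of $V_K^0$, multiply by $e^{iK\theta}$, pass to the polynomial $p(z)=z^Kf$ and count roots on the unit circle), it diverges in the one place that actually matters: the borderline case $N=2K$. For $N\geq 2K+1$ both arguments are the same trivial Vandermonde count. For $N=2K$ the paper reindexes the second sum so that only the frequencies $0,\dots,2K-1$ appear, at the price of making the coefficients $\tilde c_k=e^{i\theta}c_{k+1-K}$ depend on $\theta$; the subsequent appeal to linear independence of the columns $(e^{ik\theta_s})_s$ then no longer yields $\tilde c_k=0$, so the paper's proof has a genuine gap exactly there. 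Your route closes it: you identify the obstruction as the non-vanishing of the $z^K$-coefficient of $\prod_{s=1}^{2K}(z-\omega^s)$, i.e.\ of $e_K(\omega,\dots,\omega^{2K})$, and settle it with the $q$-binomial factorization $e_K=\omega^{K(K+1)/2}\prod_{j=1}^K(1-\omega^{K+j})/(1-\omega^j)$, which is nonzero precisely because $\gamma<2\pi$ forces $\omega^m\neq 1$ for $m=1,\dots,2K$. This buys something real: it shows the hypothesis is sharp. At $\gamma=2\pi$ with $N=2K$ one has $\omega^{2K}=1$, $e_K=0$, and indeed $f(\theta)=\sin(K\theta)$ lies in $V_K^0$ and vanishes at every node $\theta_s=\pi s/K$, so $\bC$ has a zero column and the proposition (and the remark following it, which asserts the conclusion for the "arbitrary range $0<\gamma\leq 2\pi$") fails in that limiting case. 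Your proof is therefore not only a different argument but the correct one for the stated generality, provided the standing limited-view assumption $\gamma<2\pi$ of the section is kept (or $N>2K$ is required when $\gamma=2\pi$).
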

\begin{proof}
  Multiplying $f\in V_K^0$ in \eqref{eq:fourier_serie_complex} by $e^{i K\theta}$, and using the
  fact that $c_0=0$, we have
  \begin{align}
    \label{eq:e_m_f}
    e^{i K\theta} f(\theta) &= \sum_{k=0}^{K-1} c_{k-K} e^{i k \theta} + \sum_{k=K+1}^{2K} c_{k-K}
    e^{i k \theta} \notag \\
    &=  \sum_{k=0}^{K-1} c_{k-K} e^{i k \theta} + \sum_{k=K}^{2K-1} e^{i\theta}c_{k+1-K} e^{i k
      \theta} = \sum_{k=0}^{2K-1} \tilde c_k e^{i k \theta},
  \end{align}
  where  $\tilde c_k=c_{k-K}$ for $k=0,\ldots, K-1$, and $\tilde c_k = e^{i\theta}
  c_{k+1-K}$ for $k=K, \ldots, 2K-1$. The $N$ vectors $v_s:=(e^{i k\theta_s})_{k=0\ldots 2K-1}$
  are
  linearly independent since they are the first $2K\leq N$ rows of a $N \times N$ Vandermonde
  matrix. Therefore, $f(\theta_s)=0$ for $s=1\ldots N$ implies that $\tilde
c_k=0$ for all $k=0,\ldots, 2K-1$,
 which means that $\bC$ is of maximal rank.
\end{proof}
Consequently, for arbitrary range $0<\gamma\leq 2\pi$, a
sufficient condition to uniquely determine the CGPTs of order up
to $K$ is to have $N \geq 2K$ sources/receivers.

\subsection{Explicit left inverse of $\bC$}
\label{sec:explicit_linv_C}

We denote by $D_K(\theta)$ the Dirichlet kernel of order $K$:
\begin{align}
  \label{eq:Dirichlet_kernel}
  D_K(\theta)=\sum_{k=-K}^K e^{ik\theta} =
  \frac{\sin((K+1/2)\theta)}{\sin(\theta/2)}.
\end{align}
We state without proof the following well known result about
$V_K$.
\begin{lem}
\label{lem:VK_innerprod_sum} The functions
$\set{D_K(\theta-\frac{2\pi n}{2K+1})}_{n=0,\ldots, 2K}$ is an
orthogonal basis of $V_K$. For any $f,g\in V_{K}$, the following
identity holds:
\begin{align}
  \label{eq:VK_innerprod_sum}
  \frac {1}{2\pi} \int_0^{2\pi} f(\theta) g^*(\theta)d\theta = \frac 1 {2K+1}\sum_{n=1}^{2K+1}f\Paren{\frac {2\pi
      n}{2K+1}}g\Paren{\frac {2\pi n}{2K+1}},
\end{align}
where $^*$ denotes the complex conjugate. In particular,  we have
for $n=0, \ldots, 2K$
\begin{align}
  \label{eq:VK_dirichlet_coeff}
  \frac {1}{2\pi} \int_0^{2\pi} f(\theta)D_K\Paren{\theta-\frac {2\pi n}{2K+1}}
  d\theta = f\Paren{\frac {2\pi n}{2K+1}}.
\end{align}
\end{lem}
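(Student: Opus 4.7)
The plan is to exploit the fact that $V_K$ is the $(2K{+}1)$-dimensional space of trigonometric polynomials of degree at most $K$, together with the fundamental identity that $D_K$ is the reproducing kernel for $V_K$ in $L^2(0,2\pi)$ with respect to the normalized measure $d\theta/(2\pi)$. Concretely, for $|k|\leq K$ one has $\frac{1}{2\pi}\int_0^{2\pi} e^{ik\theta} D_K(\theta-\alpha)\,d\theta = e^{ik\alpha}$ by orthogonality of the exponentials, since the Fourier coefficients of $D_K(\cdot-\alpha)$ are $e^{-ik\alpha}$ for $|k|\leq K$ and zero otherwise. Expanding any $f\in V_K$ in this basis and using linearity immediately yields the reproducing identity \eqref{eq:VK_dirichlet_coeff}.

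For the orthogonality claim, I would compute the pairwise inner products of the translated kernels via Parseval. For $m,n\in\{0,\ldots,2K\}$,
\begin{equation*}
\frac{1}{2\pi}\int_0^{2\pi}D_K\!\Bigl(\theta-\tfrac{2\pi m}{2K+1}\Bigr)D_K\!\Bigl(\theta-\tfrac{2\pi n}{2K+1}\Bigr)d\theta=\sum_{k=-K}^{K}e^{ik\cdot 2\pi(m-n)/(2K+1)}=D_K\!\Bigl(\tfrac{2\pi(m-n)}{2K+1}\Bigr).
\end{equation*}
For $m=n$ this equals $2K+1$, while for $m\ne n$ the closed form in \eqref{eq:Dirichlet_kernel} has numerator $\sin(\pi(m-n))=0$ and nonzero denominator $\sin(\pi(m-n)/(2K+1))$, so the inner product vanishes. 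This produces $2K+1$ pairwise orthogonal nonzero elements of $V_K$; since $\dim V_K=2K+1$, they form an orthogonal basis.

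For the quadrature identity \eqref{eq:VK_innerprod_sum}, the key observation is that $f\,g^{*}$ (equivalently $f g$, which is all that matters here) lies in $V_{2K}$, so it suffices to show that the Gauss-type rule based on the nodes $2\pi n/(2K+1)$ is exact on $V_{2K}$. Writing any $h\in V_{2K}$ as $h(\theta)=\sum_{k=-2K}^{2K}d_k e^{ik\theta}$, the left-hand side of \eqref{eq:VK_innerprod_sum} equals $d_0$. On the right-hand side, the discrete orthogonality
\begin{equation*}
\frac{1}{2K+1}\sum_{n=1}^{2K+1}e^{ik\cdot 2\pi n/(2K+1)}=\begin{cases}1,& (2K{+}1)\mid k,\\ 0,& \text{otherwise},\end{cases}
\end{equation*}
together with $|k|\leq 2K<2K+1$, isolates the $k=0$ term and again returns $d_0$. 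Thus both sides coincide, and applying this to $h=f g^{*}$ (or $h=fg$) proves \eqref{eq:VK_innerprod_sum}.

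None of the steps presents a genuine obstacle; the only mildly technical point is verifying the vanishing of the Dirichlet kernel at the sample points $2\pi(m-n)/(2K+1)$, which is immediate from the closed form but worth writing out carefully to justify that the orthogonal set has full cardinality $2K+1$.
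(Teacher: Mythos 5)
Your proof is correct. The paper states this lemma without proof (it is introduced as a "well known result"), so there is no in-paper argument to compare against; your route --- the reproducing-kernel property of $D_K$ on $V_K$, Parseval for the pairwise inner products of the translated kernels, a dimension count, and the discrete orthogonality of the $(2K+1)$-th roots of unity to get exactness of the quadrature on $V_{2K}$ --- is the standard one and coincides with the sketch the authors left commented out in the source. You also correctly flag the one wrinkle in the statement: the conjugate on $g$ appears on the left of \eqref{eq:VK_innerprod_sum} but not on the right, which is a typo; your argument proves the quadrature rule for arbitrary $h\in V_{2K}$ and hence covers both consistent readings, and in all of the paper's applications ($g$ a translated Dirichlet kernel, $\cos(k\theta)$, or $\sin(k\theta)$) the function $g$ is real so the distinction is immaterial.
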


\begin{lem}
\label{lem:VK_interpolation}
  Given a set of $N>2K$ different points $0< \theta_1<\ldots<\theta_N\leq 2\pi$, there exist
  interpolation kernels $h_s\in V_{\floor{N/2}}$ for $s=1\ldots N$, such that:
  \begin{align}
    \label{eq:f_interpl}
    f(\theta) = \sum_{s=1}^{N} f(\theta_s) h_s(\theta) \ \text{ for any } f\in
    V_K.
  \end{align}
\end{lem}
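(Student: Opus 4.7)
The plan is to construct the kernels $h_s$ explicitly via a trigonometric analogue of Lagrange interpolation on the unit circle, and then to obtain the interpolation identity from a linear algebra argument inside $V_{\lfloor N/2 \rfloor}$.

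First, I would set $z = e^{i\theta}$, $z_s = e^{i\theta_s}$, and $d = \lfloor (N-1)/2 \rfloor$, and define
\begin{equation*}
h_s(\theta) \;:=\; z_s^{d}\, z^{-d} \prod_{s'\neq s} \frac{z - z_{s'}}{z_s - z_{s'}}.
\end{equation*}
Because $0 < \theta_1 < \cdots < \theta_N \leq 2\pi$, the points $z_s$ are distinct, so the denominators are nonzero, and a direct substitution gives $h_s(\theta_{s'}) = \delta_{ss'}$. Expanded in $z$, the numerator is a polynomial of degree $N-1$, so $h_s$ is a Laurent polynomial in $z$ with frequencies in $[-d,\,N-1-d]$. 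For $N$ odd this interval equals $[-(N-1)/2,\,(N-1)/2]$, and for $N$ even it equals $[-(N/2-1),\,N/2]$; in either parity it is contained in $[-\lfloor N/2 \rfloor,\,\lfloor N/2 \rfloor]$, so $h_s \in V_{\lfloor N/2 \rfloor}$.

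For the interpolation identity, let $W := \mathrm{span}\{h_1,\ldots,h_N\}$. The relation $h_s(\theta_{s'}) = \delta_{ss'}$ makes the $h_s$ linearly independent, so $\dim W = N$. By the frequency count above, $W$ lies in the space $W'$ of Laurent polynomials in $z$ with frequency set $[-d,\,N-1-d]$, which also has dimension $N$, whence $W = W'$. Using $N > 2K$, a short check distinguishing $N$ odd ($N\geq 2K+1$, so $K\leq (N-1)/2 = d$) from $N$ even ($N\geq 2K+2$, so $K\leq N/2-1 = d$) shows $K\leq d$ and $K \leq N-1-d$, hence $V_K \subseteq W$. For any $f \in V_K \subseteq W$, the element $\sum_{s} f(\theta_s)\, h_s$ of $W$ takes the values $f(\theta_{s'})$ at the sample points; since the evaluation map on $W$ is an isomorphism onto $\mathbb{C}^N$ (its matrix in the basis $\{h_s\}$ is the identity), this linear combination coincides with $f$, proving \eqref{eq:f_interpl}.

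The main obstacle is the parity bookkeeping: the shift $d$ must be chosen finely enough that $[-d,\,N-1-d]$ is simultaneously wide enough on each side to contain $[-K,K]$, and narrow enough to lie in $[-\lfloor N/2\rfloor,\lfloor N/2\rfloor]$. An off-by-one mismatch breaks either the inclusion $V_K\subseteq W$ (so the identity fails for some $f\in V_K$) or the inclusion $W\subseteq V_{\lfloor N/2\rfloor}$ (so the kernels leave the required space). Aside from this, the argument is purely a dimension count combined with duality between the kernels and evaluation functionals.
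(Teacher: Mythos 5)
Your proof is correct. It is the same basic strategy as the paper's --- exhibit explicit Lagrange-type trigonometric kernels with $h_s(\theta_{s'})=\delta_{ss'}$ --- but you supply the verification that the paper outsources: the paper simply quotes the classical sine-product kernel for odd $N$ (Zygmund) and the cosine-corrected variant for even $N$ (Margolis--Eldar) and asserts that membership in $V_{\lfloor N/2\rfloor}$ and the reproducing identity are ``easy to see,'' whereas you prove both from scratch by a dimension count on $W=\mathrm{span}\{h_s\}$ inside the $N$-dimensional frequency band $[-d,\,N-1-d]$. For $N$ odd your kernel coincides with the paper's, since $\sin\bigl(\tfrac{\theta-\theta_t}{2}\bigr)=\tfrac{z-z_t}{2i\,z^{1/2}z_t^{1/2}}$ turns the sine product into exactly $z_s^{d}z^{-d}\prod_{t\neq s}\tfrac{z-z_t}{z_s-z_t}$ with $d=(N-1)/2$. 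For $N$ even your kernel is genuinely different: the interpolation problem with $N$ nodes in $V_{N/2}$ (dimension $N+1$) is underdetermined, and you pick the solution with asymmetric spectrum $[-(N/2-1),\,N/2]$ rather than the paper's real, symmetric-spectrum choice; both are legitimate, and your parity bookkeeping ($K\le d$ and $K\le N-1-d$ from $N>2K$) is exactly right. The one thing your choice costs is that your even-$N$ kernels are not real-valued, which is harmless for this lemma and for the left-inverse construction in Proposition \ref{prop:expl-left-inverse} (the integrals \eqref{eq:leftinv_C} would then produce a complex $\tilde\bC$), but it is worth being aware that the paper's choice keeps everything real.
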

\begin{proof}
  When the number of points $N$ is odd, it is well known \cite{zygmund_trigonometric_1988}
  that $h_s$ takes the form
  \begin{align}
    \label{eq:hs_odd}
    h_s(\theta) = \prod_{t=1,t\neq
      s}^{N}\frac{\sin\Paren{\frac{\theta-\theta_t}{2}}}{\sin\Paren{\frac{\theta_s-\theta_t}{2}}}.
  \end{align}
  When $N$ is even, by a result established in
 \cite{margolis_nonuniform_2008}
  \begin{align}
    \label{eq:hs_even}
    h_s(\theta) = \cos\Paren{\frac{\theta-\theta_s}{2}}\prod_{t=1,t\neq
      s}^{N}\frac{\sin\Paren{\frac{\theta-\theta_t}{2}}}{\sin\Paren{\frac{\theta_s-\theta_t}{2}}}.
  \end{align}
  It is easy to see that in both cases $h_s$ belongs to $V_{\floor{N/2}}$.
\end{proof}

Now we can find explicitly a left inverse for $\bC$.
\begin{prop}
  \label{prop:expl-left-inverse}
  Under the same condition as in Lemma \ref{lem:VK_interpolation}, we denote by $h_s$ the interpolation
  kernel and define the matrix
  $\tilde \bC=(\tilde \bC_{ks})_{k,s}$ as
  \begin{align}
    \label{eq:leftinv_C}
    \tilde \bC_{2k-1,s} = \frac{1}{\pi}\int_0^{2\pi} h_s(\theta) \cos(k\theta) d\theta, \ \
    \tilde \bC_{2k,s} = \frac{1}{\pi}\int_0^{2\pi} h_s(\theta) \sin(k\theta)
    d\theta.
  \end{align}
  Then $\tilde \bC \bC = \bI$.  In particular, if $N$ is odd, the matrix $\tilde \bC$ can be
  calculated as
  \begin{align}
    \label{eq:leftinv_C_sum}
    \tilde \bC_{2k-1,s} = \frac{2}{N} \sum_{n=1}^{N}h_s\Paren{\frac {2\pi n}{N}}
    \cos\Paren{\frac{2\pi kn}{N}}, \ \
    \tilde \bC_{2k,s} = \frac{2}{N} \sum_{n=1}^{N}h_s\Paren{\frac {2\pi n}{N}} \sin\Paren{\frac{2\pi
    kn}{N}}.
  \end{align}
\end{prop}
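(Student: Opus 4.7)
\medskip
\noindent\textbf{Proof proposal.} The plan is to view $\bC$ as the sampling operator on the space $V_K^0$, and $\tilde\bC$ as the operator that reads off Fourier coefficients; then $\tilde\bC\bC = \bI$ becomes exactly the statement that the Fourier coefficients of $f\in V_K^0$ are recovered from its samples via the interpolation kernels of Lemma~\ref{lem:VK_interpolation}.

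First, I would unwind what $\bC$ does. For any coefficient vector $\mathbf{v}=(\alpha_1,\beta_1,\ldots,\alpha_K,\beta_K)^\top$, the vector $\bC\mathbf{v}\in\R^N$ is the sequence of samples $f(\theta_s)$ of the function $f(\theta)=\sum_{k=1}^K \alpha_k\cos(k\theta)+\beta_k\sin(k\theta)\in V_K^0$. Since $N>2K$ and $V_K^0\subset V_K\subset V_{\lfloor N/2\rfloor}$, Lemma~\ref{lem:VK_interpolation} gives $f(\theta)=\sum_{s=1}^N f(\theta_s)h_s(\theta)$. By the usual orthogonality relations for $\{\cos(k\theta),\sin(k\theta)\}_{k\geq 1}$ on $[0,2\pi]$,
\begin{align*}
\alpha_k &= \frac{1}{\pi}\int_0^{2\pi} f(\theta)\cos(k\theta)\,d\theta = \sum_{s=1}^N f(\theta_s)\cdot\frac{1}{\pi}\int_0^{2\pi} h_s(\theta)\cos(k\theta)\,d\theta = \sum_{s=1}^N \tilde\bC_{2k-1,s}\, f(\theta_s),
\end{align*}
and identically $\beta_k = \sum_s \tilde\bC_{2k,s}\,f(\theta_s)$. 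Reading this componentwise, $\tilde\bC(\bC\mathbf{v})=\mathbf{v}$ for every $\mathbf{v}\in\R^{2K}$, i.e.\ $\tilde\bC\bC = \bI_{2K}$.

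For the second assertion I would appeal to the quadrature identity \eqref{eq:VK_innerprod_sum} in Lemma~\ref{lem:VK_innerprod_sum}. When $N$ is odd, $\lfloor N/2\rfloor = (N-1)/2$, so $2\lfloor N/2\rfloor+1=N$. Moreover $h_s\in V_{\lfloor N/2\rfloor}$ by Lemma~\ref{lem:VK_interpolation}, while $\cos(k\theta),\sin(k\theta)\in V_k\subset V_{\lfloor N/2\rfloor}$ because $k\leq K\leq \lfloor N/2\rfloor$ (using $N\geq 2K+1$ for odd $N$). Hence both factors in the integrals defining $\tilde\bC_{2k-1,s}$ and $\tilde\bC_{2k,s}$ lie in $V_{(N-1)/2}$, and \eqref{eq:VK_innerprod_sum} with $K$ replaced by $(N-1)/2$ converts the integrals into $N$-point sums sampled at $2\pi n/N$, which is exactly \eqref{eq:leftinv_C_sum}.

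The only point requiring care is the degree bookkeeping in the second part: one must verify that the product $h_s(\theta)\cos(k\theta)$ falls within the range where the $N$-node quadrature in Lemma~\ref{lem:VK_innerprod_sum} is exact, which is where the hypothesis $N>2K$ (strict) and the oddness of $N$ play their role. The first part is essentially formal once the interpolation formula of Lemma~\ref{lem:VK_interpolation} is invoked, so no genuine obstacle arises there.
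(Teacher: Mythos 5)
Your proof is correct and follows essentially the same route as the paper: both use the interpolation formula of Lemma~\ref{lem:VK_interpolation} together with the orthogonality of $\{\cos(k\theta),\sin(k\theta)\}$ to show $(\tilde\bC\bC v)_{2k-1}=\alpha_k$ and $(\tilde\bC\bC v)_{2k}=\beta_k$, and then invoke the quadrature identity \eqref{eq:VK_innerprod_sum} with $h_s,\cos(k\cdot),\sin(k\cdot)\in V_{\lfloor N/2\rfloor}$ to obtain \eqref{eq:leftinv_C_sum} for odd $N$. Your degree bookkeeping in the second part is, if anything, more explicit than the paper's.
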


\begin{proof}
  Given $v=(\alpha_1,\beta_1\ldots\alpha_K,\beta_K)\in\C^{2K}$, and $f$ the associated function
  defined by \eqref{eq:fourier_serie}, we have $(\bC v)_n = f(\theta_n)$ for $n=1,\ldots, N$.
  Using \eqref{eq:f_interpl}
  and \eqref{eq:leftinv_C}, we find that
  \begin{align}
    \label{eq:CCv}
    (\tilde \bC\bC v)_{2k-1} &= \frac{1}{\pi}\int_0^{2\pi} f(\theta) \cos(k\theta) d\theta = \alpha_k, \\
    (\tilde \bC\bC v)_{2k} &= \frac{1}{\pi}\int_0^{2\pi} f(\theta) \sin(k\theta) d\theta =
    \beta_k,
  \end{align}
 and therefore, $\tilde \bC \bC v= v$.  Observe that $h_s(\theta)$, $\cos(k\theta),$ and $\sin(k\theta)$
  all belong to $V_{\floor{N/2}}$, so when $N$ is odd, we  easily deduce \eqref{eq:leftinv_C_sum}
  using \eqref{eq:VK_innerprod_sum}.
\end{proof}

\begin{rmk}
  In general, the left inverse $\tilde\bC$ in \eqref{eq:leftinv_C} is not the pseudo-inverse of
  $\bC$, and by definition, we have $\pinv\bC=\tilde\bC$ if $\bC\tilde\bC$ is symmetric.
  If
  $P_{V_K^0}(h_s)$ is the orthogonal projection of $h_n$ onto
  $V_K^0$, {\it i.e.},
  \begin{align}
    \label{eq:hs_VK_proj}
    P_{V_K^0}(h_s)(\theta) = \sum_{k=1}^K  \tilde \bC_{2k-1,s}\cos(k\theta) +  \tilde
    \bC_{2k,s}\sin(k\theta),
  \end{align}
  then, $P_{V_K^0}(h_s)(\theta_t) = (\bC\tilde \bC)_{st}$. Therefore, $\tilde\bC$ is the pseudo-inverse of $\bC$ if and only if the interpolation kernel $h_s$
satisfies:
  \begin{align}
    \label{eq:sampling_kernel_cond}
    P_{V_K^0}(h_s)(\theta_t) = P_{V_K^0}(h_t)(\theta_s), \ \for s,t=1\ldots
    N.
  \end{align}
\end{rmk}


\begin{rmk} Proposition \ref{prop:expl-left-inverse} can be used
in the noiseless limited-view case to reconstruct the CGPT matrix
$\bM$ from the MSR measurements $\bV$. In fact, from
(\ref{eq:op_L}) it immediately follows that
$$
  \bM = \bD^{-1} \tilde{\bC} \bV \tilde{\bC}^\top \bD^{-1}. $$
This shows that in the noiseless case, the limited-view aspect has
no effect on the reconstruction of the GPTs, and consequently on
the location and orientation tracking. In the presence of noise,
the effect, as will be shown in the next subsection, is dramatic.
A small amount of measurement noise significantly changes the
performance of our algorithm unless the arrays of receivers and
transmitters offer a directional diversity, see
Fig.~\ref{fig:target_path_lim_aov}.
\end{rmk}

\subsection{Ill-posedness in the limited-view setting}
\label{sec:recon_cgpt_num}
We undertake a numerical study to illustrate the ill-posedness of
the linear system \eqref{eq:MSR_CGPT_linsys} in the case of
limited-view data. Fig.~\ref{fig:svd_CtC_DCtCD_gamma} shows the
distribution of eigenvalues of the matrix $\CtC$ and $\DCtCD$ at
different values of $\gamma$ with $N=101$ and $K=50$. In
Fig.~\ref{fig:svd_CtC_DCtCD_cond}, we calculate the condition
number of $\CtC$ and $\bL$ (which is equal to that of $\DCtCD$ by
\eqref{eq:cond_L_lim_aov}) for different orders $K$. From these
results, we see clearly the effect of the limited-view aspect.
First, the tail of tiny eigenvalues in
Fig.~\ref{fig:svd_CtC_DCtCD_gamma}.(a) suggests that the matrix
$\CtC$ is numerically singular, despite the fact that $\bC$ is of
maximal rank. Secondly, both $\CtC$ and $\bL$ rapidly become
extremely ill-conditioned as $K$ increases, so the maximum
resolving order of CGPTs is very limited. Furthermore, this limit
is intrinsic to the angle of view and cannot be improved by
increasing the number of source/receivers, see
Fig.~\ref{fig:svd_CtC_DCtCD_cond} (c) and (d).

\begin{figure}[htp]
  \centering
  \subfigure[Eigenvalues of $\CtC$]{\includegraphics[width=7.5cm]{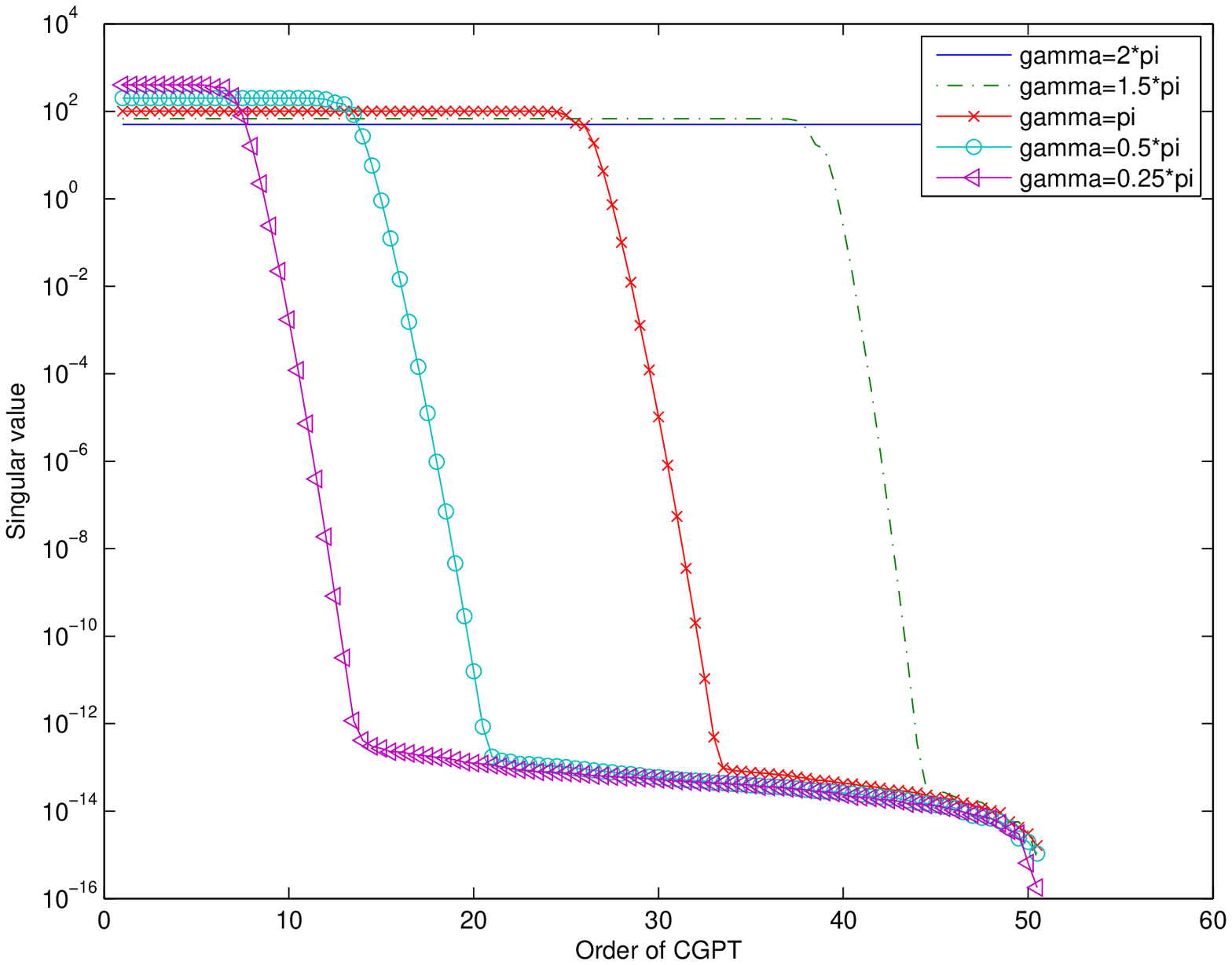}}
  \subfigure[Eigenvalues of $\DCtCD$]{\includegraphics[width=7.5cm]{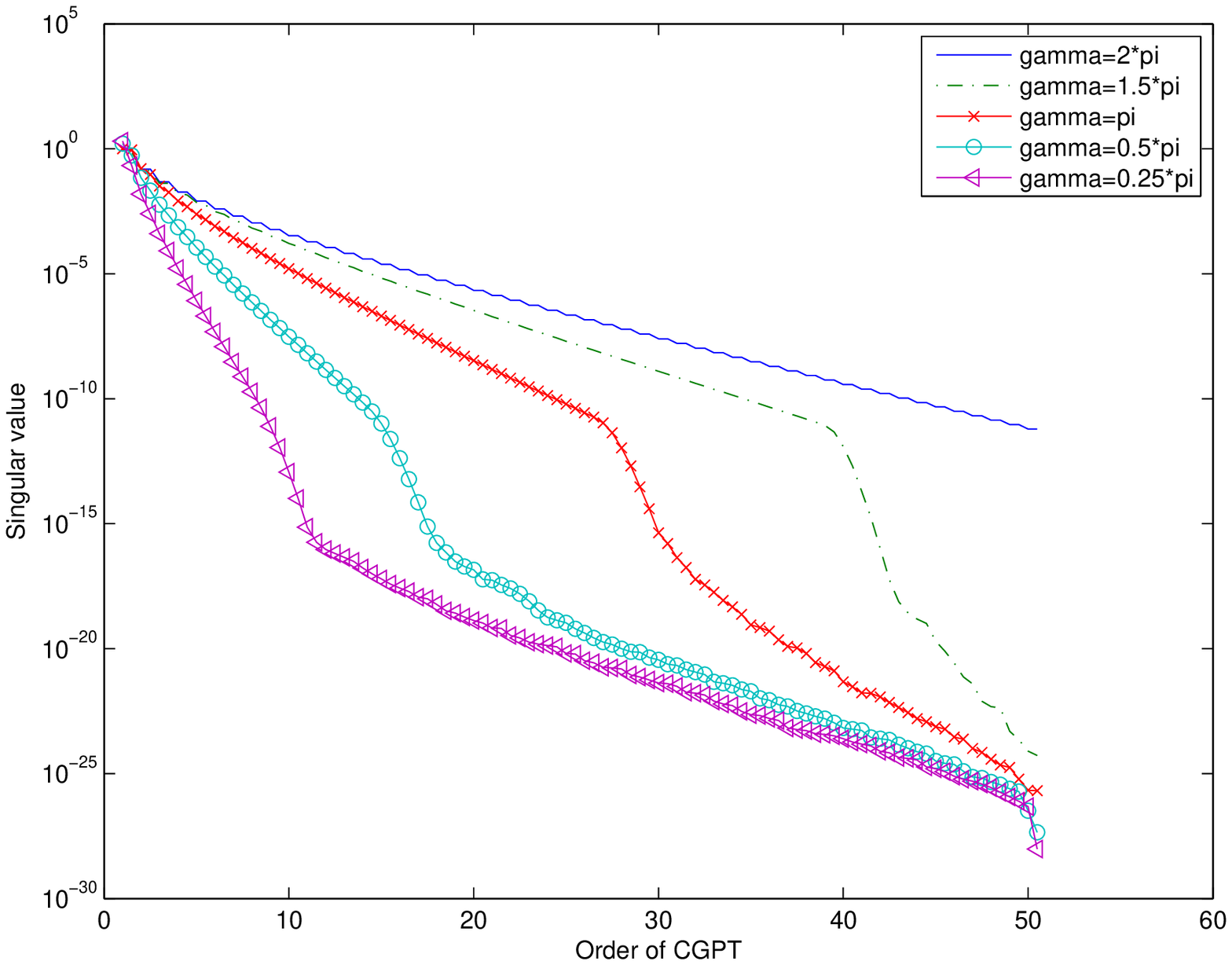}}
  \caption{Distribution of eigenvalues (in log scale) of the matrix $\CtC$ (a) and $\DCtCD$
    (b). $N=101$ sources are equally spaced between $[0,\gamma)$ on a circle of radius $\rho=1.2$,
    and $K=50$. Each curve corresponds to a different value of $\gamma$. The matrix $\CtC$ and
    $\DCtCD$ are calculated from these parameters and their eigenvalues are sorted in decreasing
    order.}
  \label{fig:svd_CtC_DCtCD_gamma}
\end{figure}

\begin{figure}[htp]
  \centering
  \subfigure[Condition number of $\CtC$]{\includegraphics[width=7.5cm]{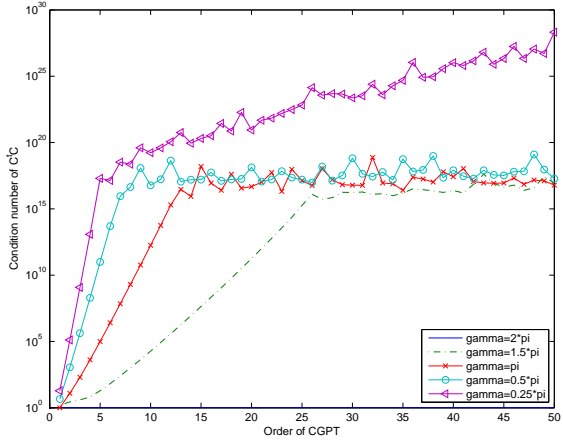}}
  \subfigure[Condition number of $\bL$]{\includegraphics[width=7.5cm]{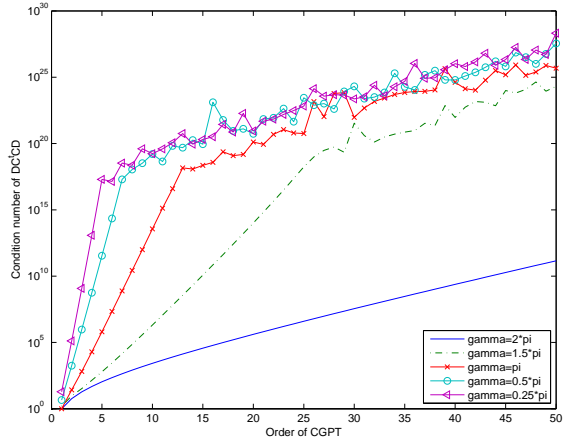}}
  \subfigure[Condition number of $\CtC$]{\includegraphics[width=7.5cm]{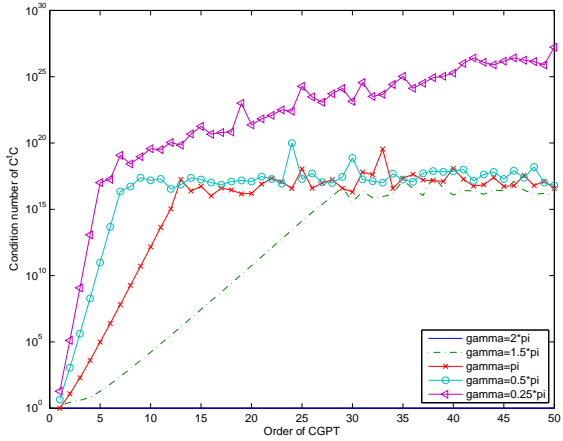}}
  \subfigure[Condition number of $\bL$]{\includegraphics[width=7.5cm]{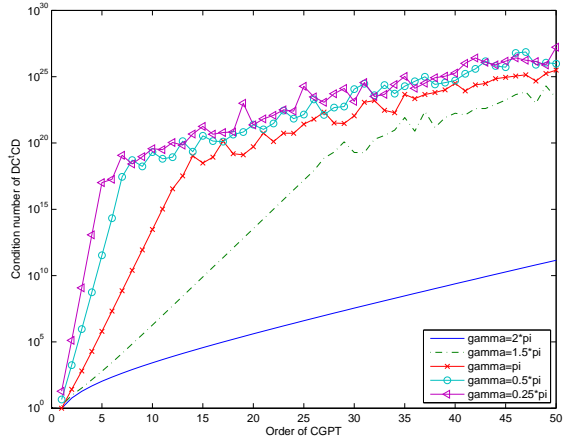}}
  \caption{Condition numbers (in log scale) of the matrix $\CtC$ (a) and the operator $\bL$ (b)
  for
    different orders $K$ between $[1,50]$. As in Fig.~\ref{fig:svd_CtC_DCtCD_gamma}, $N=101$ sources
    are equally spaced between $[0,\gamma)$ on a circle of radius $\rho=1.2$. Fig.(c) and (d) are
    the same experiment as Fig.(a) and (b) but with $N=1001$.}
  \label{fig:svd_CtC_DCtCD_cond}
\end{figure}

\subsection{Reconstruction of CGPTs}
\label{sec:recon_cgpt_num2}

The analysis above suggests that the least-squares problem
\eqref{eq:Least_square_CGPT_recon} is not adapted to the CGPT
reconstruction in a limited-view setting. Actually, the truncation
error or the noise of measurement will be amplified by the tiny
singular values of $\bL$, and yields extremely instable
reconstruction of high-order CGPTs, {\it e.g.}, $K\geq 2$.
Instead, we, in order to reconstruct CGPTs from the MSR data, use
Thikhonov regularization and propose to solve
\begin{align}
  \label{eq:Least_square_reg_CGPT_recon}
    \min_{\bM}\  \norm{\bL(\bM) - \bV}_F^2 + \mu \norm{\bM}_F^2,
\end{align}
with $\mu>0$ a small regularization constant. It is well known
that the effect of the regularization term is to truncate those
singular values of $\bL$ smaller than $\mu$, which consequently
stabilizes the solution. The optimal choice of $\mu$ depends on
the noise level, and here we determine it from the range
$[10^{-6}, 10^{-1}]$ by comparing the solution of
\eqref{eq:Least_square_reg_CGPT_recon} with the true CGPTs.

Here we reconstruct the CGPTs of an ellipse with the parameter
$N=101, K=50$, and $\gamma$ varying between 0 and $2\pi$.  The
major and minor axis of the ellipse are 1 and 0.5 respectively. In
Fig.~\ref{fig:cgpt_lim_aov} we show the error of the first 2 order
CGPTs reconstructed through \eqref{eq:Least_square_reg_CGPT_recon}
and \eqref{eq:Least_square_CGPT_recon} at three different noise
levels. It can be seen that, for small $\gamma$, the error
obtained by \eqref{eq:Least_square_reg_CGPT_recon} is
substantially smaller.

\begin{figure}[htp]
  \centering
  \subfigure[]{\includegraphics[width=7.5cm]{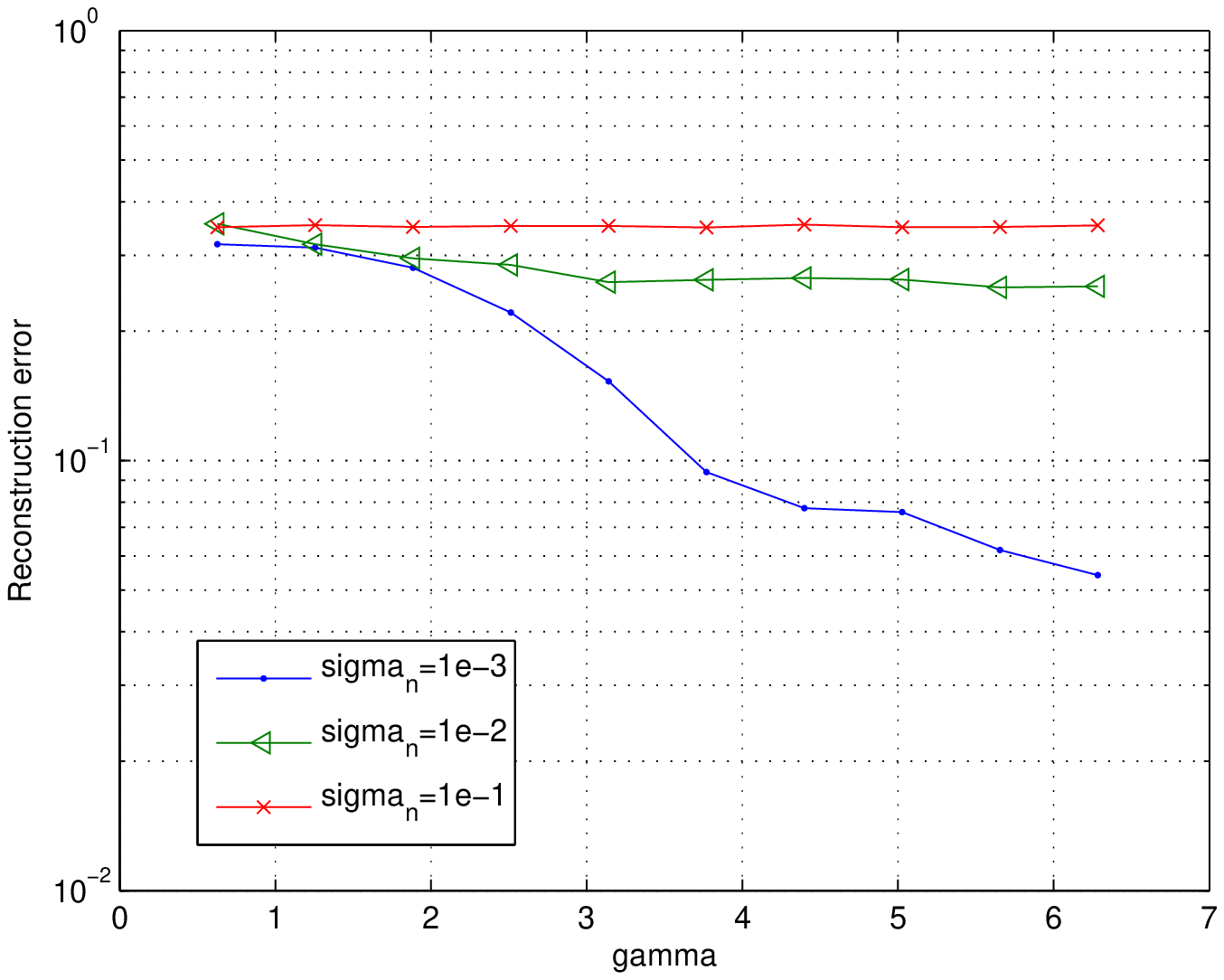}}
  \subfigure[]{\includegraphics[width=7.5cm]{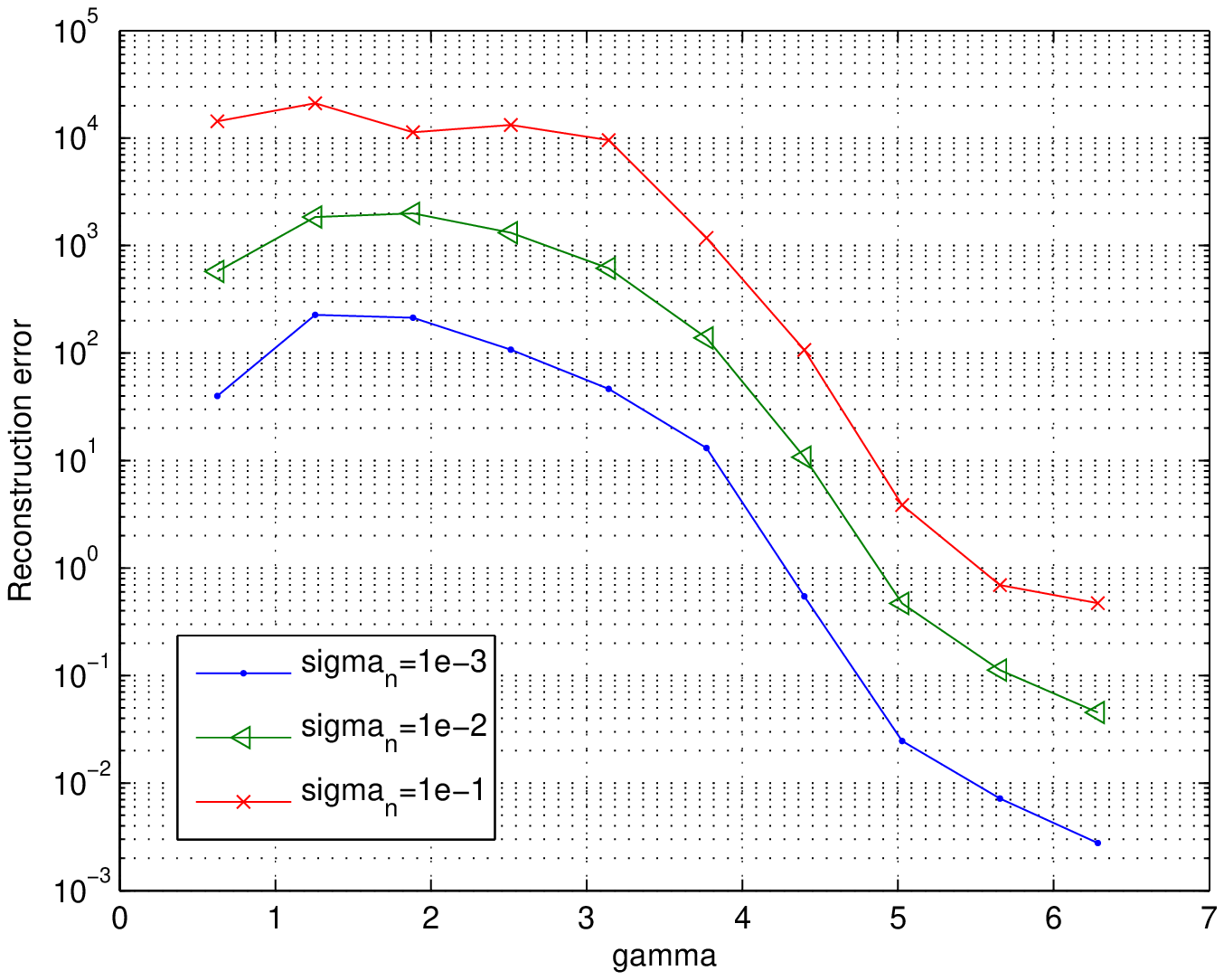}}
  \caption{Error of reconstructed CGPT of an ellipse compared with true CGPT values at different
    noise levels. We solve \eqref{eq:Least_square_reg_CGPT_recon} and
    \eqref{eq:Least_square_CGPT_recon} with $N=101, K=50$, and compare the first two orders with the
    true CGPT. The $x$-axis is the angle of view $\gamma$. Fig.(a): results of
    \eqref{eq:Least_square_reg_CGPT_recon}, Fig.(b): results of \eqref{eq:Least_square_CGPT_recon}.}
  \label{fig:cgpt_lim_aov}
\end{figure}

\subsection{Tracking in the limited-view setting}
The performance of the tracking algorithm can also be affected by
the limited angle of view. We repeat the experiment of subsection
\ref{sec:tracking_numexp_full_view} with $\delta=10$,
$\gamma=\pi$, and the same initial guess. In the first
configuration, $N=21$ sources/receivers are equally distributed
between $[0,\gamma)$, see Fig.~\ref{fig:target_path_lim_aov} (a).
The results of tracking by EKF presented in
Fig.~\ref{fig:tracking_lim_aov_uni_nonuni} (a), (c) and (e) show
large deviations in the estimation of position, and a totally
wrong estimation of orientation.  In the second configuration, we
divide the sources/receivers into 5 groups placed in a nonuniform
way on $[0, 2\pi)$, and each group covers only an angle range of
$0.2\pi$, see Fig.~\ref{fig:target_path_lim_aov} (b). Although the
total angular coverages are the same in both configurations, the
second one gives much better tracking results, as shown in
Fig.~\ref{fig:tracking_lim_aov_uni_nonuni} (b), (d) and (f). These
results clearly demonstrates the importance of a large angle of
view (or a directional diversity) for the tracking problem.

\begin{figure}[htp]
  \centering
  \includegraphics[width=7cm]{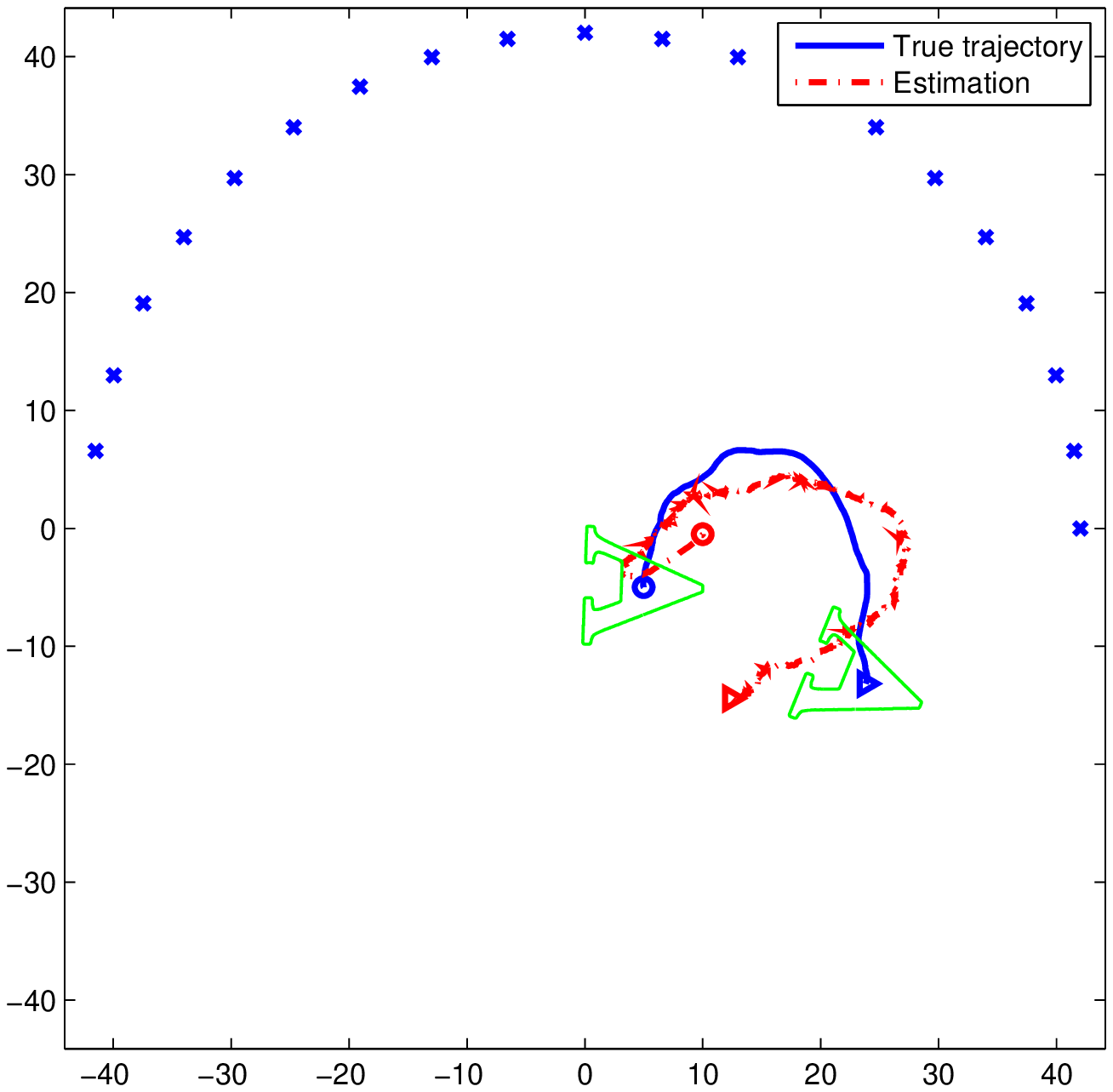}
  \includegraphics[width=7cm]{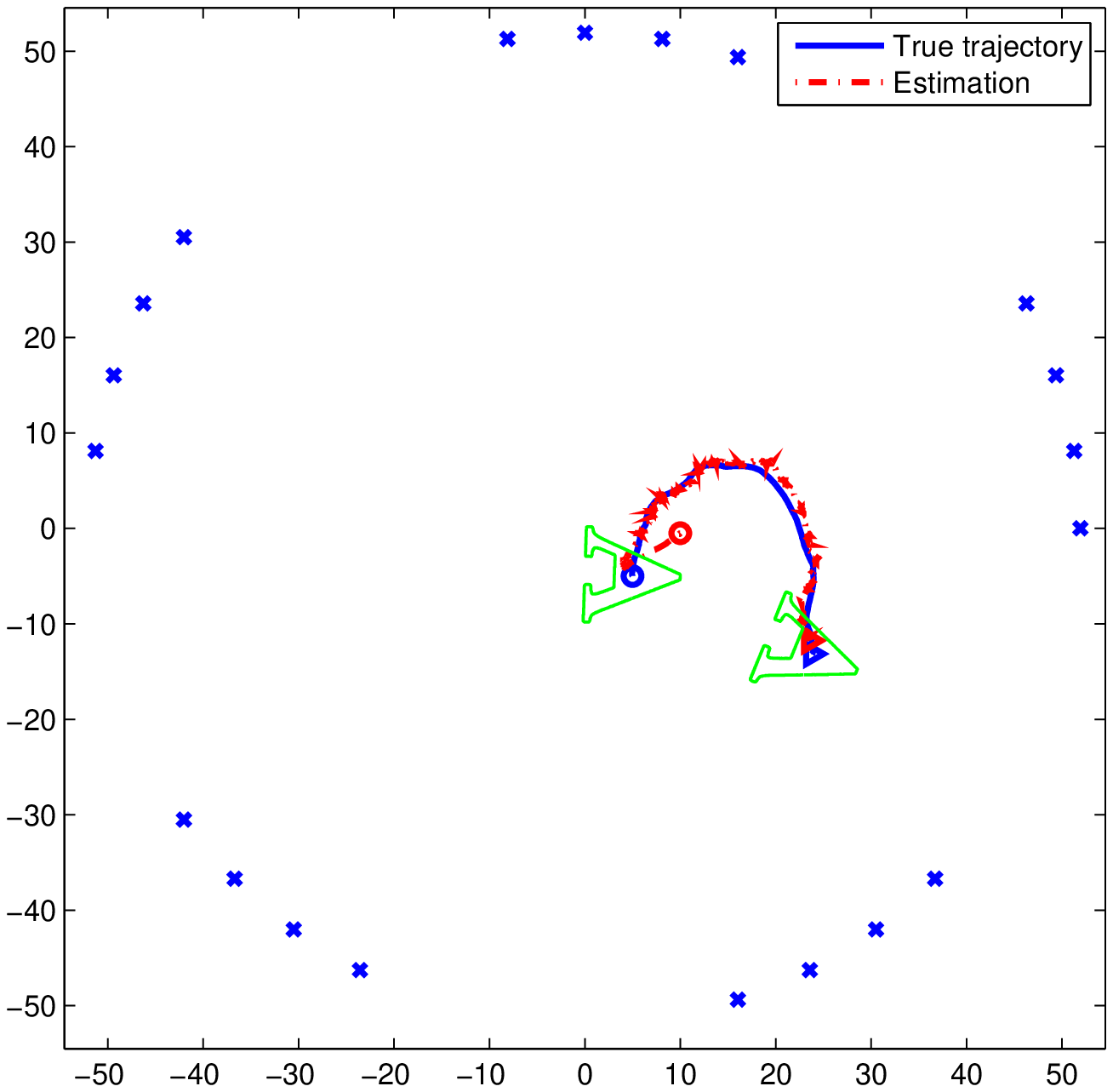}
  \caption{Same experiment as in Fig. \ref{fig:target_path}, with a limited angle of view
    $\gamma=\pi$. In Fig.(a) sources/receivers are equally distributed between $[0,\gamma)$, while
    in Fig.(b) they are divided into 5 groups.}
  \label{fig:target_path_lim_aov}
\end{figure}

\begin{figure}[htp]
  \centering
  \subfigure[]{\includegraphics[width=6.5cm]{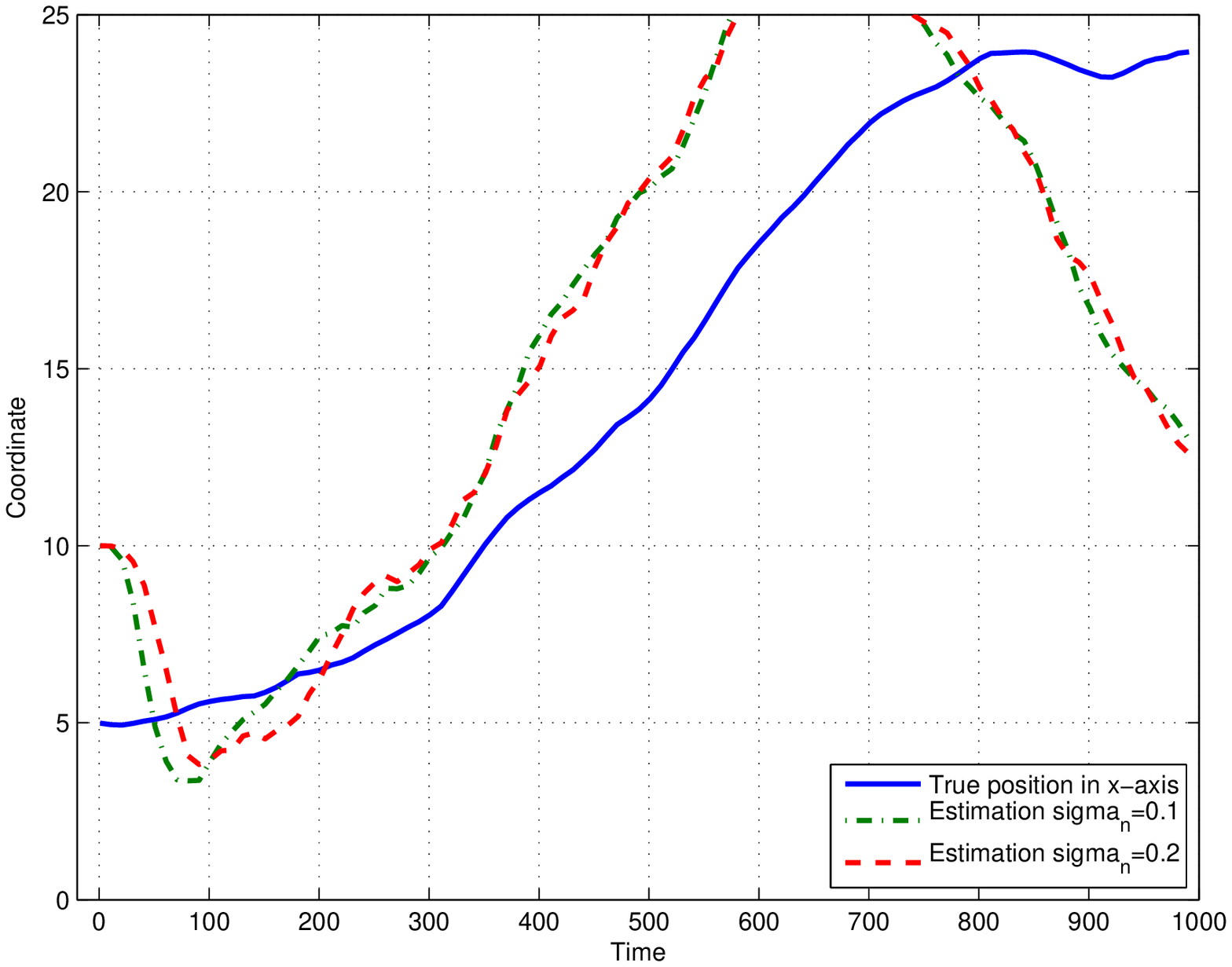}}
  \subfigure[]{\includegraphics[width=6.5cm]{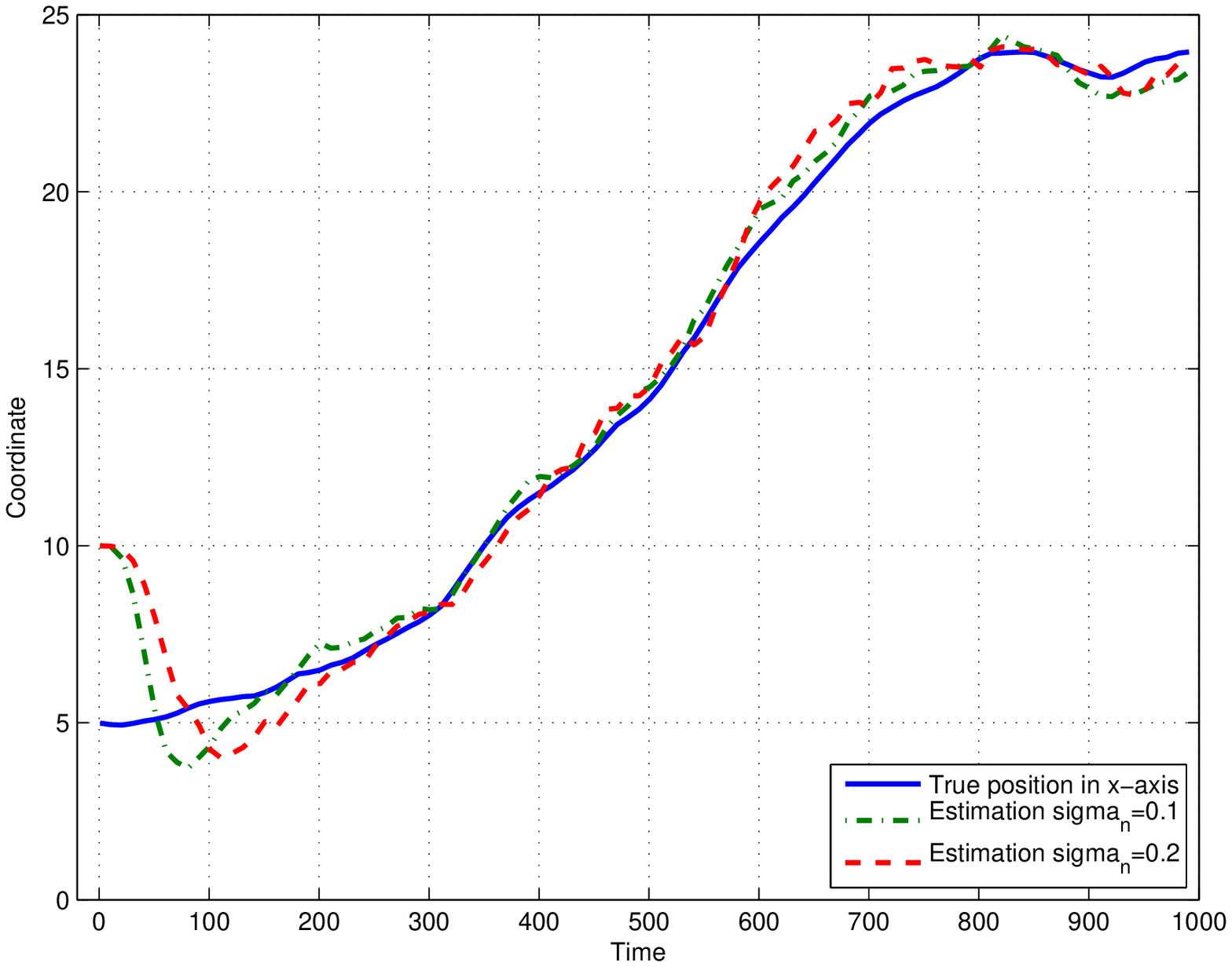}}
  \subfigure[]{\includegraphics[width=6.5cm]{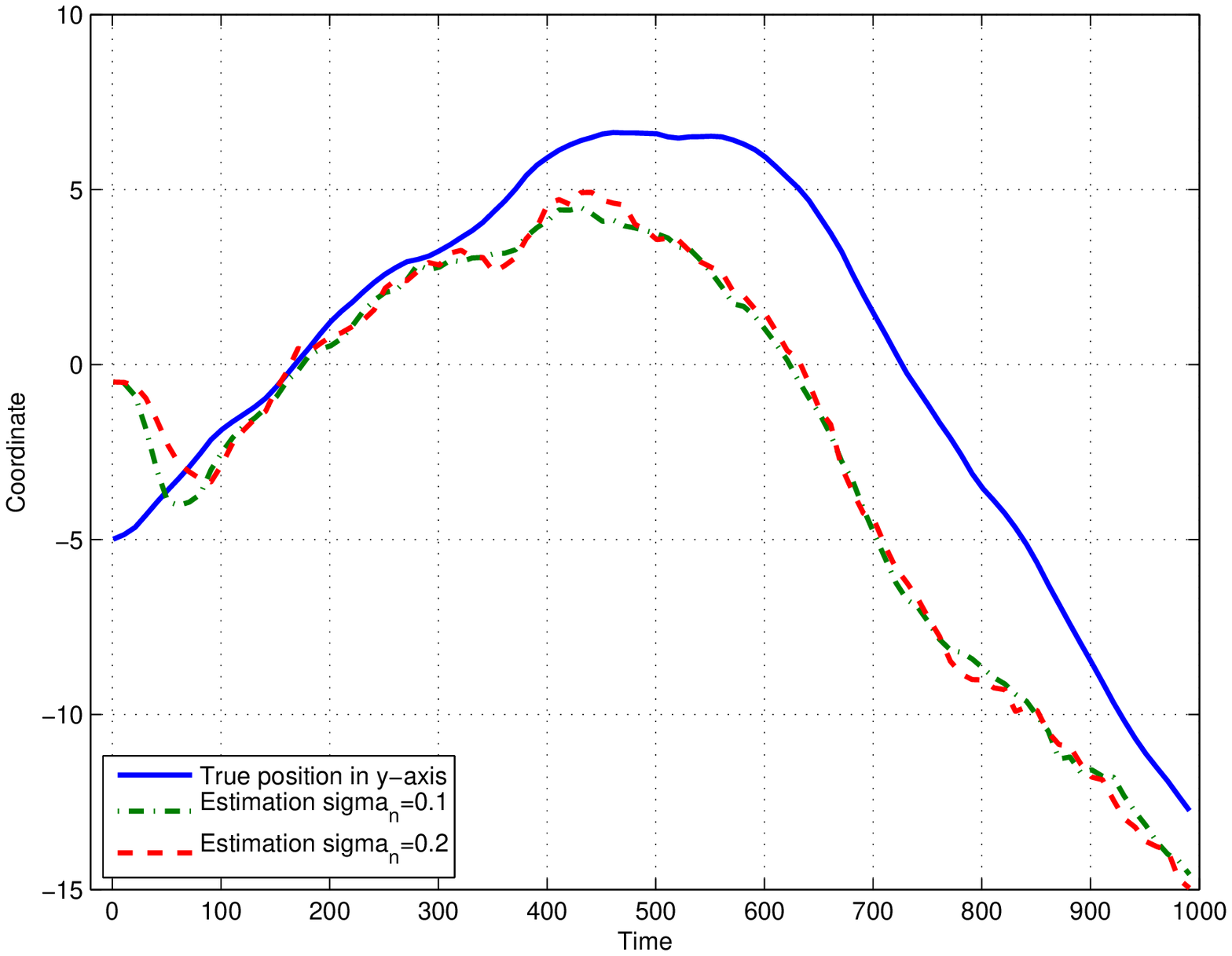}}
  \subfigure[]{\includegraphics[width=6.5cm]{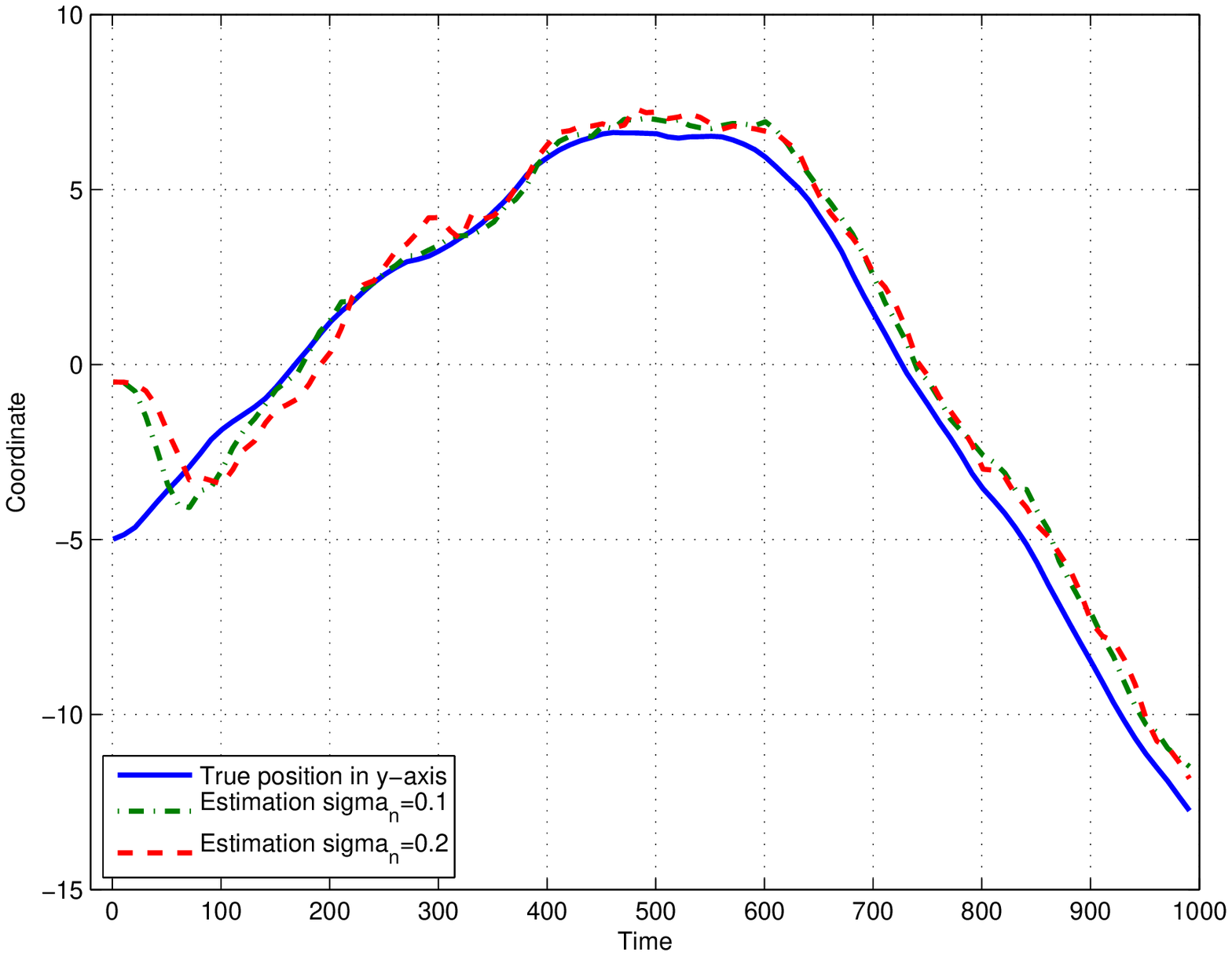}}
  \subfigure[]{\includegraphics[width=6.5cm]{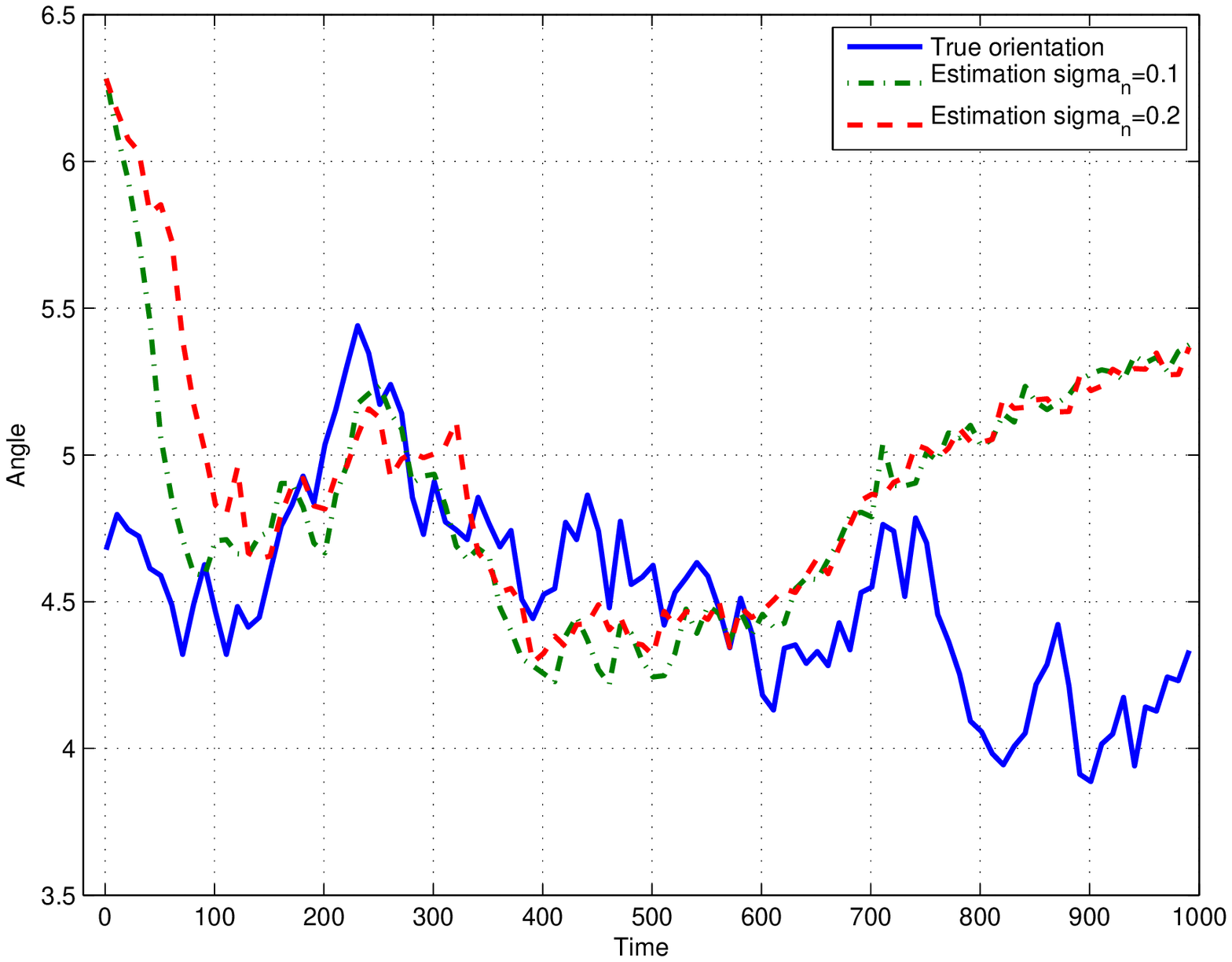}}
  \subfigure[]{\includegraphics[width=6.5cm]{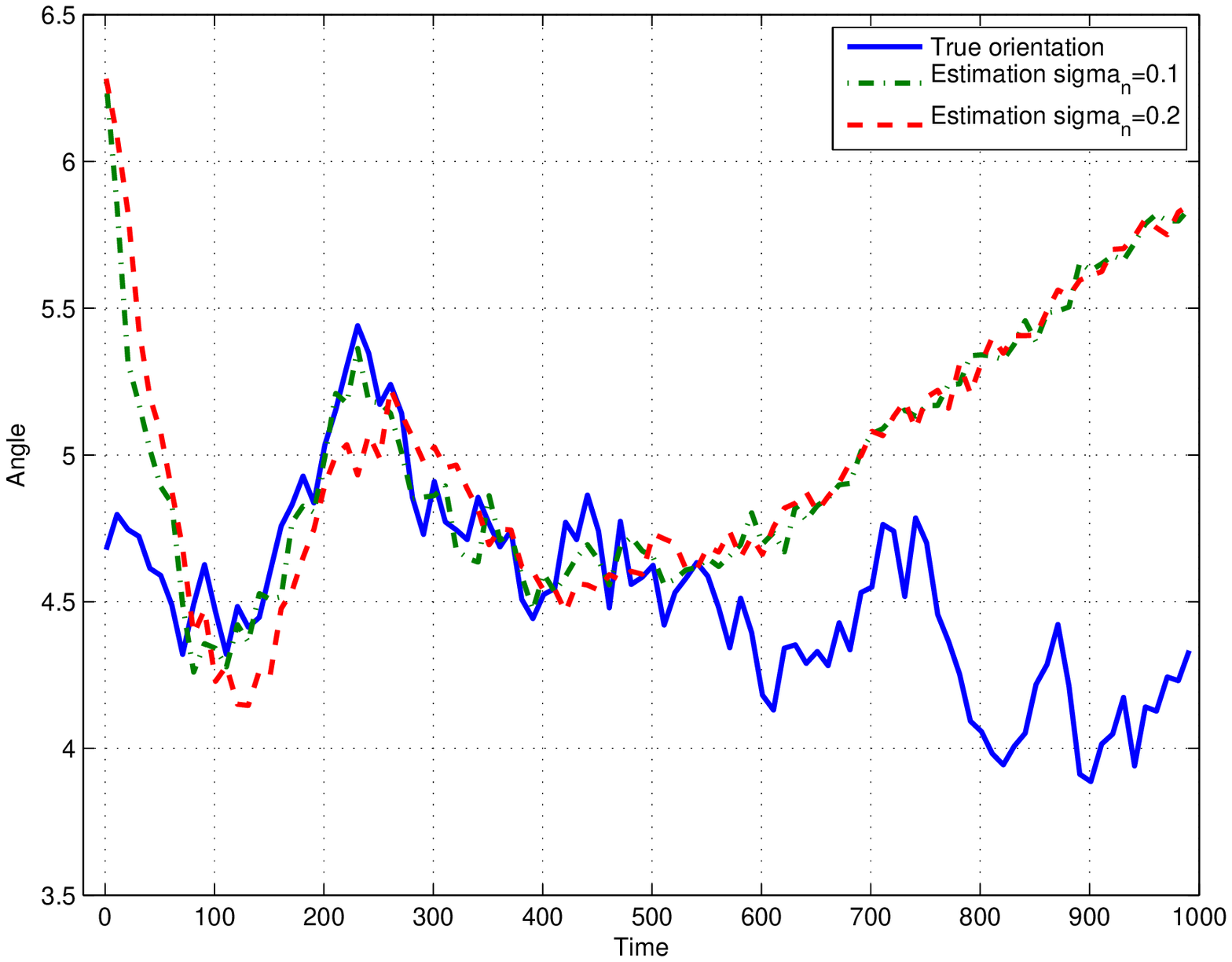}}
  \caption{Results of tracking using the configuration of Fig.~\ref{fig:target_path_lim_aov} at
    different noise levels. First row: coordinate in $x$-axis. Second row: coordinate in
    $y$-axis. Last row: orientation. First and second column correspond to the configuration in
    Fig.~\ref{fig:target_path_lim_aov} (a) and (b), respectively.}
  \label{fig:tracking_lim_aov_uni_nonuni}
\end{figure}

\section{Conclusion}
In this paper we have provided a location and orientation tracking of a mobile target
from MSR measurements in the full- and limited-view settings. Our algorithm is based
on the concept of GPTs. In the limited-view case, the effect of noise is severe on
the tracking.  However, if the arrays of receivers and transmitters offer a good
directional diversity, then satisfactory results can be obtained. It would be
interesting to generalize our algorithms for tracking multiple targets. As a first
step, a matching pursuit algorithm \cite{mallat} would be appropriate for recognizing
the targets. This will be the subject of a forthcoming work.

\appendix
\section{Kalman Filter}
\label{sec:KalmanFilter} The KF is a recursive method that uses a
stream of noisy observations to produce an optimal estimator of
the underlying system state \cite{kalman}. Consider the following
time-discrete dynamical system ($t\geq 1$):
\begin{align}
  \label{eq:Dynamic_system}
  X_t &= F_tX_{t-1}+W_t , \\ 
  Y_t &= H_tX_t+V_t .
\end{align}
where
\begin{itemize}
\item $X_t$ is the vector of \emph{system state}; \item $Y_t$ is
the vector of \emph{observation}; \item $F_t$ is the state
transition matrix which is applied to the previous state
  $X_{t-1}$;
\item $H_t$ is the observation matrix which yields the (noise
free) observation from
  a system state $X_t$;
\item $W_t\sim\normallaw{Q_t}$ is the process noise and
$V_t\sim\normallaw{R_t}$ is
  the observation noise, with respectively $Q_t$ and $R_t$ the covariance
  matrix. These two noises are independent between them, further, $W_t$ of different
  time instant are also mutually independent (the same for $V_t$).
\end{itemize}
Suppose that $X_0$ is Gaussian. Then it follows that the process
$(X_t,Y_t)_{t\geq 0}$ is Gaussian.  The objective is to estimate
the system state $X_t$ from the accumulated observations
$\Yt:=[Y_1\ldots Y_t]$.

The optimal estimator (in the least-squares sense) of the system
state $X_t$ given the observations $\Yt$ is the conditional
expectation
\begin{align}
  \label{eq:conditional_estimator}
  \xtt = \EE[{X_t|\Yt}].
\end{align}
Since the joint vector $(X_t,\Yt)$ is Gaussian, the conditional
expectation $\xtt$ is a linear combination of $\Yt$, which can be
written in terms of $\hat{x}_{t-1|t-1}$ and $Y_t$ only. The
purpose of the KF is to calculate $\xtt$ from $\hat{x}_{t-1|t-1}$
and $Y_t$.


We summarize the algorithm in the following.

\textbf{Initialization:}
\begin{align}
  \label{eq:Kalman_init}
  \hat x_{0|0} = \EE[{X_0}], \ P_{0|0} = \cov{X_0}.
\end{align}

\textbf{Prediction:}
\begin{align}
  \label{eq:Kalman_prediction}
  \xtn &= F_t \xnn ,\\
  \Ye_t &= Y_t - H_t \xtn ,\\
  P_{t|t-1} &= F_t P_{t-1|t-1} F_t^T + Q_t .
\end{align}

\textbf{Update:}
\begin{align}
  \label{eq:Kalman_update}
  S_t &= H_t P_{t|t-1} H_t^T + R_t ,\\
  K_t &= P_{t|t-1}H_t^T S_t^{-1} ,\\
  \xtt &= \xtn + K_t \Ye_t , \\
  P_{t|t} &= (I - K_tH_t) P_{t-1|t-1} .
\end{align}
To apply the KF algorithm the covariance matrices $Q_t, R_t$ must
be known.

\section{Extended Kalman Filter}
\label{sec:extend-kalm-filt}

Consider now a nonlinear dynamical system:
\begin{align}
  X_{t} &= f_t(X_{t-1},W_t) ,\\ 
  Y_{t} &= h_t(X_t,V_t) ,\label{eqn:nlsys_obs}
\end{align}
where $X_t, Y_t, W_t, V_t$ are the same as in the KF, while the
functions $f_t, h_t$ are nonlinear and differentiable. Nothing can
be said in general on the conditional distribution $X_t|\Yt$ due
to the nonlinearity. The EKF calculates an approximation of the
conditional expectation \eqref{eq:conditional_estimator} by an
appropriate linearization of the state transition and observation
models, which makes the general scheme of KF still applicable
\cite{kalman2}. However, the resulting algorithm is no more
optimal in the least-squares sense due to the approximation.

Let $F_X=\partial_X f(\xnn, 0), F_W=\partial_W f(\xnn, 0)$, the
partial derivatives of $f$ (with respect to the system state and
the process noise) evaluated at $(\xnn, 0)$, and let
$H_X=\partial_X h(\xtn, 0), H_V=\partial_V h(\xtn, 0)$ be the
partial derivatives of $h$ (with respect to the system state and
the observation noise) evaluated at $(\xtn, 0)$. The EKF algorithm
is summarized below.

\textbf{Initialization:}
\begin{align}
  \label{eq:EKF_init}
  \hat x_{0|0} = \EE[{X_0}], \ P_{0|0} = \cov{X_0}.
\end{align}

\textbf{Prediction:}
\begin{align}
  \label{eq:EKF_prediction}
  \xtn &= f(\xnn,0) , \\
  \Ye_t &= Y_t - h(\xtn, 0) ,\\
  P_{t|t-1} &= F_X P_{t-1|t-1} F_X^T + F_WQ_tF_W^T .
\end{align}

\textbf{Update:}
\begin{align}
  \label{eq:EKF_update}
  S_t &= H_X P_{t|t-1} H_X^T + H_VR_tH_V^T , \\
  K_t &= P_{t|t-1}H_X^T S_t^{-1} ,\\
  \xtt &= \xtn + K_t \Ye_t , \\
  P_{t|t} &= (I - K_tH_X) P_{t-1|t-1} .
\end{align}

\bibliographystyle{plain}

\end{document}